\pgfplotsset{compat=1.14}
\theoremstyle{plain}
\newtheorem{theorem}{Theorem}[section]
\newtheorem{lemma}[theorem]{Lemma}
\newtheorem{corollary}[theorem]{Corollary}
\newtheorem{proposition}[theorem]{Proposition}
\newtheorem{definition}[theorem]{Definition}
\theoremstyle{remark}
\newtheorem{remark}[theorem]{Remark}
\begin{document}
	\title{Estimation under matrix quadratic loss \\and matrix superharmonicity}
	\author{Takeru~Matsuda\thanks{RIKEN Center for Brain Science, e-mail: \texttt{takeru.matsuda@riken.jp}}
	~and~William~E.~Strawderman\thanks{Department of Statistics and Biostatistics, Rutgers University}
	}
	\date{}
	
	\maketitle

		\begin{abstract}
	We investigate estimation of a normal mean matrix under the \textit{matrix quadratic loss}.
	Improved estimation under the matrix quadratic loss implies improved estimation of any linear combination of the columns. 
	First, an unbiased estimate of risk is derived and the Efron–Morris estimator is shown to be minimax. 
	Next, a notion of matrix superharmonicity for matrix-variate functions is introduced and shown to have analogous properties with usual superharmonic functions, which may be of independent interest. 
	Then, we show that the generalized Bayes estimator with respect to a matrix superharmonic prior is minimax. 
	We also provide a class of matrix superharmonic priors that includes the previously proposed generalization of Stein’s prior. 
	Numerical results demonstrate that matrix superharmonic priors work well for low rank matrices.	
\end{abstract}

\section{Introduction}
Suppose that we have a matrix observation $X \in \mathbb{R}^{n \times p}$ whose entries are independent normal random variables $X_{ij} \sim {\rm N} (M_{ij},1)$, where $n-p-1>0$ and $M \in \mathbb{R}^{n \times p}$ is an unknown mean matrix. 	
By using the notation of matrix-variate normal distributions, it is expressed as $X \sim {\rm N}_{n,p}(M,I_n,I_p)$, where $I_n$ is the $n$-dimensional identity matrix.
In this setting, we consider estimation of $M$ under the \textit{matrix quadratic loss} \citep{abu,Bilodeau,Honda,Xie}:
\begin{align*}
L(M,\hat{M}) = (\hat{M} - M)^{\top} (\hat{M} - M),
\end{align*}
which takes a value in the set of $p \times p$ positive semidefinite matrices.
Namely, the risk function of an estimator $\hat{M}=\hat{M}(X)$ is defined as
\begin{align*}
R(M,\hat{M}) = {\rm E}_M [L(M,\hat{M}(X))],
\end{align*}
and an estimator $\hat{M}_1$ is said to dominate another estimator $\hat{M}_2$ if $R(M,\hat{M}_1) \preceq R(M,\hat{M}_2)$ for every $M$, where $\preceq$ is the L\"{o}wner order: $A \preceq B$ means that $B-A$ is positive semidefinite.
Thus, if $\hat{M}_1$ dominates $\hat{M}_2$, then
\begin{align*}
{\rm E}_M [ \| (\hat{M}_1 - M) c \|^2] \leq {\rm E}_M [\| (\hat{M}_2 - M) c \|^2]
\end{align*}
for every $M$ and $c \in \mathbb{R}^p$.
In particular, each column of $\hat{M}_1$ dominates that of $\hat{M}_2$ as an estimator of the corresponding column of $M$ under quadratic loss.

In the context of multivariate linear regression, improved estimation of the regression coefficient matrix under the matrix quadratic loss implies improved estimation of mean response for any value of the explanatory variables.
Specifically, let $x_i \in \mathbb{R}^p$ and $y_i \sim {\rm N}_q(B^{\top} x_i, \Sigma)$ for $i=1,\dots,n$, where $B \in \mathbb{R}^{p \times q}$ and $\Sigma \in \mathbb{R}^{q \times q}$ is a known covariance matrix.
By using the notation of matrix-variate normal distributions, it is expressed as $Y \sim {\rm N}_{n,q} (X B, I_n, \Sigma)$, where $X = (x_1, \dots, x_n)^{\top} \in \mathbb{R}^{n \times p}$ and $Y = (y_1, \dots, y_n)^{\top} \in \mathbb{R}^{n \times q}$.
Then, $\bar{B}=(X^{\top}X)^{-1} X^{\top} Y$ is sufficient for $B$ and its distribution is $\bar{B} \sim {\rm N}_{p,q} (B, (X^{\top} X)^{-1}, \Sigma)$.
Thus, multivariate linear regression reduces to estimation of $B$ based on $\bar{B}$.
If $\hat{B}_1$ dominates $\hat{B}_2$ under the matrix quadratic loss $L(B,\hat{B})=(\hat{B}-B)(\hat{B}-B)^{\top}$, then $\hat{B}_1$ provides better estimation of mean response $B^{\top} x$ for any value of the explanatory variables $x$: $E \{ \| (\hat{B}_1 - B)^{\top} x \|^2 \} \leq E \{ \| (\hat{B}_2 - B)^{\top} x \|^2 \}$.

Whereas we adopt the matrix quadratic loss in this study, most existing studies on estimation of a normal mean matrix used the Frobenius loss:
\begin{align*}
l(M,\hat{M}) = \| \hat{M} - M \|_{{\rm F}}^2 = \sum_{a=1}^n \sum_{i=1}^p (\hat{M}_{ai} - M_{ai})^2,
\end{align*}
which is the trace of the matrix quadratic loss: $l(M,\hat{M}) = {\rm tr} \ L(M,\hat{M})$.
When $n-p-1>0$, Efron and Morris~\cite{Efron72} proposed an estimator
\begin{align}
\hat{M}_{{\rm EM}} = X \left( I_p-(n-p-1) (X^{\top} X)^{-1} \right), \label{EM_estimator}
\end{align}
which is viewed as a matrix version of the James--Stein estimator.
They showed that $\hat{M}_{{\rm EM}}$ is minimax and dominates the maximum likelihood estimator $\hat{M}=X$ under the Frobenius loss.
Note that $\hat{M}_{{\rm EM}}$ does not change the singular vectors but shrinks the singular values of $X$ towards zero \citep{Stein74}.
Motivated from this property, Matsuda and Komaki~\cite{Matsuda} developed a singular value shrinkage prior 
\begin{align}
\pi_{{\rm SVS}} (M)=\det (M^{\top} M)^{-(n-p-1)/2}, \label{SVS}
\end{align} 
which is viewed as a matrix version of Stein's prior for a normal mean vector \citep{Stein74}.
They proved that this prior is superharmonic and thus the generalized Bayes estimator with respect to $\pi_{{\rm SVS}}$ is minimax and dominates the maximum likelihood estimator under the Frobenius loss.
Both the Efron--Morris estimator and the generalized Bayes estimator with respect to $\pi_{{\rm SVS}}$ have large risk reduction when $M$ is close to low rank, because they shrink the singular values separately.

In this study, we investigate shrinkage estimation of a normal mean matrix under the matrix quadratic loss.
First, we derive an unbiased estimate of matrix quadratic risk for general estimators and prove that the Efron--Morris estimator \eqref{EM_estimator} is minimax.
Next, we introduce a notion of \textit{matrix superharmonicity} for matrix-variate functions and show that it has analogous properties with usual superharmonic functions \cite{Helms}, which may be of independent interest.
Then, we show that the generalized Bayes estimator with respect to a matrix superharmonic prior is minimax.
For a normal mean vector ($p=1$), it reduces to the result by Stein~\cite{Stein74} on quadratic loss.
We also provide a class of matrix superharmonic priors, which includes the singular value shrinkage prior \eqref{SVS}.
Finally, we give several numerical results, which demonstrate that matrix superharmonic priors work well for low rank matrices.

\section{Estimation under matrix quadratic loss}
In this section, we provide preliminary results on estimation of a normal mean matrix under the matrix quadratic loss.

\subsection{Definition of minimaxity}
Since the matrix quadratic loss is only partially ordered, the definition of minimaxity under this loss is not straightforward.
In this paper, we adopt the following definition.

\begin{definition}
	An estimator $\hat{M}$ is said to be minimax under the matrix quadratic loss if
	\begin{align*}
	\sup_M \ c^{\top} R(M,\hat{M}) c = \inf_{\hat{M}'} \sup_M \ c^{\top} R(M,\hat{M}') c
	\end{align*}
	for every $c \in \mathbb{R}^p$.
\end{definition}

Namely, we define $\hat{M}$ to be minimax under the matrix quadratic loss if $\hat{M} c$ is a minimax estimator of $Mc$ under quadratic loss for every $c$, because $c^{\top} R(M,\hat{M}) c = {\rm E}_M [\| \hat{M}c-Mc \|^2]$.
In particular, since $Xc$ is a minimax estimator of $Mc$ for every $c$, the maximum likelihood estimator $\hat{M}=X$ is minimax under the matrix quadratic loss with constant risk $R(M,\hat{M})=n I_p$.
Thus, any estimator that dominates the maximum likelihood estimator is also minimax.

\subsection{Unbiased estimate of risk}
Here, we derive an unbiased estimate of the matrix quadratic risk.
Our derivation is based on Stein's lemma for matrix-variate normal distributions expressed by a matrix version of divergence.
Note that a locally integrable function $g: \mathbb{R}^{n \times p} \to \mathbb{R}^{n \times p}$ is said to be weakly differentiable \citep{shr_book} if there exist locally integrable functions $\partial_{ai} h$ for $a =1,\dots,n$ and $i=1,\dots,p$ such that
\begin{align*}
	\int h(X) \frac{\partial}{\partial X_{ai}} \phi(X) {\rm d} X = -\int \partial_{ai} h(X) \phi(X) {\rm d} X
\end{align*}
holds for any infinitely differentiable function $\phi: \mathbb{R}^{n \times p} \to \mathbb{R}$ with compact support.

\begin{definition}
	For a function $g: \mathbb{R}^{n \times p} \to \mathbb{R}^{n \times p}$, its {matrix divergence} $\widetilde{\mathrm{div}} \ g: \mathbb{R}^{n \times p} \to \mathbb{R}^{p \times p}$ is defined as 
	\begin{align*}
	(\widetilde{\mathrm{div}} \ g(X))_{ij} = \sum_{a=1}^n \frac{\partial}{\partial X_{ai}} g_{aj}(X).
	\end{align*}
\end{definition}

\begin{lemma}\label{lem_sure}
	Let $X \sim {\rm N}_{n,p}(M,I_n,I_p)$ and $g: \mathbb{R}^{n \times p} \to \mathbb{R}^{n \times p}$ be a weakly differentiable function.
	Then,
	\begin{align*}
	{\rm E}_M [(X-M)^{\top} g(X)] = {\rm E}_M [\widetilde{\mathrm{div}} \ g(X)].
	\end{align*}
\end{lemma}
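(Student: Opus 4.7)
My plan is to reduce the matrix identity to $np$ applications of the classical scalar Stein identity, one for each pair $(a,i)$, and then assemble the results into the matrix divergence.

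First I would unfold the $(i,j)$ entry on each side. On the left we have
\[
\bigl({\rm E}_M[(X-M)^\top g(X)]\bigr)_{ij} = \sum_{a=1}^n {\rm E}_M\bigl[(X_{ai}-M_{ai})\, g_{aj}(X)\bigr],
\]
while on the right the $(i,j)$ entry of ${\rm E}_M[\widetilde{\mathrm{div}}\, g(X)]$ is, by definition, $\sum_{a=1}^n {\rm E}_M[\partial_{ai} g_{aj}(X)]$. So it suffices to prove, for each fixed triple $(a,i,j)$, the scalar identity ${\rm E}_M[(X_{ai}-M_{ai}) g_{aj}(X)] = {\rm E}_M[\partial_{ai} g_{aj}(X)]$.

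Next I would establish this scalar identity by integration by parts against the Gaussian density. Write the joint density of $X$ as $\phi_M(X) = (2\pi)^{-np/2} \exp\bigl(-\tfrac12 \|X-M\|_F^2\bigr)$, so that $\partial_{ai} \phi_M(X) = -(X_{ai}-M_{ai}) \phi_M(X)$. If $g_{aj}$ were smooth and compactly supported, then
\[
{\rm E}_M[(X_{ai}-M_{ai}) g_{aj}(X)] = -\int g_{aj}(X) \partial_{ai}\phi_M(X)\, {\rm d}X = \int \partial_{ai} g_{aj}(X)\, \phi_M(X)\, {\rm d}X,
\]
which is precisely the desired scalar identity. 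The general case of a weakly differentiable $g$ follows from the definition of weak derivative and a standard approximation argument: approximate $\phi_M$ by smooth compactly supported test functions $\phi_k$ (e.g., mollify a truncation), apply the weak-differentiability identity from the excerpt with $\phi = \phi_k$, and pass to the limit using dominated convergence, together with the local integrability of $g$ and $\partial_{ai} g$.

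The only delicate step is this last limiting argument, because one must ensure integrability of $(X_{ai}-M_{ai}) g_{aj}(X)$ and $\partial_{ai} g_{aj}(X)$ against the Gaussian weight. This is the standard moment hypothesis implicit in the Stein-type literature (and is how Stein's identity is stated in \cite{shr_book}); I would simply invoke that machinery rather than reprove it. Once the scalar identity is in hand, summing over $a$ gives the $(i,j)$ entry of the matrix identity, and since $(i,j)$ was arbitrary the lemma follows.
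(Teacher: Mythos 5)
Your proposal is correct and matches the paper's proof: both reduce the $(i,j)$ entry to the scalar Stein identity ${\rm E}_M[(X_{ai}-M_{ai})g_{aj}(X)]={\rm E}_M[\partial_{ai}g_{aj}(X)]$ applied entrywise and then sum over $a$; the paper simply cites Stein's lemma from the shrinkage-estimation literature where you additionally sketch its integration-by-parts derivation.
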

\begin{proof}
    By applying Stein's lemma \cite{shr_book},
    \begin{align*}
	{\rm E}_M [(X-M)^{\top} g(X)]_{ij} &= {\rm E}_M \left[ \sum_a (X_{ai}-M_{ai}) g_{aj} (X) \right] \\
	&= {\rm E}_M \left[ \sum_a \frac{\partial}{\partial X_{ai}} g_{aj} (X) \right] \\
	&= {\rm E}_M \left[ (\widetilde{\mathrm{div}} g(X))_{ij} \right].
	\end{align*}
\end{proof}

\begin{theorem}\label{th_sure}
	The matrix quadratic risk of an estimator $\hat{M}=X+g(X)$ with a weakly differentiable function $g$ is given by
	\begin{align*}
	R(M,\hat{M}) = n I_p + {\rm E}_M [\widetilde{\mathrm{div}} \ g(X) + (\widetilde{\mathrm{div}} \ g(X))^{\top} + g(X)^{\top} g(X)].
	\end{align*}
\end{theorem}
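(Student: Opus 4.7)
The plan is a direct computation: expand the matrix quadratic loss with $\hat{M}-M = (X-M) + g(X)$, take expectations term by term, and invoke Lemma~\ref{lem_sure} for the cross terms.

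First I would write
\begin{align*}
(\hat{M}-M)^\top(\hat{M}-M) = (X-M)^\top(X-M) + (X-M)^\top g(X) + g(X)^\top(X-M) + g(X)^\top g(X),
\end{align*}
so that the risk decomposes into four expectations. The first one, $\mathrm{E}_M[(X-M)^\top(X-M)]$, is easy: its $(i,j)$ entry equals $\sum_a \mathrm{E}_M[(X_{ai}-M_{ai})(X_{aj}-M_{aj})]$, which is $n$ if $i=j$ and $0$ otherwise, because the entries of $X$ are independent $\mathrm{N}(M_{ai},1)$; hence the first term contributes $nI_p$.

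Next I would apply Lemma~\ref{lem_sure} to the second term to obtain $\mathrm{E}_M[(X-M)^\top g(X)] = \mathrm{E}_M[\widetilde{\mathrm{div}}\, g(X)]$. The third term is just the transpose of the second (since $g(X)^\top(X-M) = ((X-M)^\top g(X))^\top$), so taking expectation and transposing gives $\mathrm{E}_M[(\widetilde{\mathrm{div}}\, g(X))^\top]$. The fourth term requires no manipulation. Summing the four pieces yields the claimed identity.

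I do not anticipate a serious obstacle; the only subtlety is verifying that the weak differentiability hypothesis on $g$ is enough for the entrywise application of Stein's lemma in Lemma~\ref{lem_sure}, but that was already handled when Lemma~\ref{lem_sure} was stated. The proof is therefore essentially a bookkeeping exercise once the decomposition is written down.
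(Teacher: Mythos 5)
Your proposal is correct and follows essentially the same route as the paper's proof: expand $(\hat M - M)^\top(\hat M - M)$ with $\hat M - M = (X-M)+g(X)$, note that the first term contributes $nI_p$, and apply Lemma~\ref{lem_sure} to the two cross terms (the third being the transpose of the second). No gaps.
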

\begin{proof}
	By using Lemma~\ref{lem_sure},
	\begin{align*}
	&R(M,\hat{M}) \\
	=& {\rm E}_M [(X+g(X)-M)^{\top} (X+g(X)-M)] \nonumber \\
	=& {\rm E}_M [(X-M)^{\top} (X-M) + (X-M)^{\top} g(X) + g(X)^{\top} (X-M) + g(X)^{\top} g(X)] \nonumber \\
	=& n I_p + {\rm E}_M [\widetilde{\mathrm{div}} g(X) + (\widetilde{\mathrm{div}} g(X))^{\top} + g(X)^{\top} g(X)].
	\end{align*}
\end{proof}

\subsection{Minimaxity of Efron--Morris estimator}
By using Theorem~\ref{th_sure}, we show that the Efron--Morris estimator \eqref{EM_estimator} is minimax under the matrix quadratic loss.

\begin{theorem}
	When $n-p-1>0$, the Efron--Morris estimator $\hat{M}_{\mathrm{EM}}$ in \eqref{EM_estimator} is minimax under the matrix quadratic loss.
\end{theorem}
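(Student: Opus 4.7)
The plan is to write the Efron--Morris estimator as $\hat{M}_{\mathrm{EM}} = X + g(X)$ with
\[
g(X) = -(n-p-1)\, X (X^\top X)^{-1},
\]
apply Theorem~\ref{th_sure} to obtain an explicit expression for $R(M,\hat{M}_{\mathrm{EM}})$, and verify that the correction term is negative semidefinite in the L\"owner order. Since the MLE $\hat{M}=X$ is already minimax with constant risk $nI_p$, any estimator whose risk is dominated by $nI_p$ in L\"owner order is automatically minimax in the sense of the definition above.

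The main computation is the matrix divergence $\widetilde{\mathrm{div}}\ g$. Set $W = X^\top X$ and $c=n-p-1$. Expanding $g_{aj}(X) = -c \sum_k X_{ak}(W^{-1})_{kj}$ and differentiating, one uses $\partial W_{lm}/\partial X_{ai} = \delta_{li}X_{am} + X_{al}\delta_{mi}$ together with the identity $\partial (W^{-1})/\partial X_{ai} = -W^{-1}(\partial W/\partial X_{ai})W^{-1}$ to obtain, after collecting terms,
\[
\frac{\partial g_{aj}}{\partial X_{ai}} = -c\bigl[(W^{-1})_{ij} - (XW^{-1})_{ai}(XW^{-1})_{aj} - (XW^{-1}X^\top)_{aa}(W^{-1})_{ij}\bigr].
\]
Summing over $a$, the three contractions simplify using $\sum_a (XW^{-1})_{ai}(XW^{-1})_{aj} = (W^{-1}X^\top X W^{-1})_{ij} = (W^{-1})_{ij}$ and $\sum_a (XW^{-1}X^\top)_{aa} = \operatorname{tr}(W^{-1}W) = p$. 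This yields
\[
\widetilde{\mathrm{div}}\ g(X) = -c(n-1-p)(X^\top X)^{-1} = -(n-p-1)^2 (X^\top X)^{-1},
\]
which is symmetric. A direct calculation also gives $g(X)^\top g(X) = (n-p-1)^2 (X^\top X)^{-1}$.

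Substituting into Theorem~\ref{th_sure}, the two divergence terms and the quadratic term combine into
\[
\widetilde{\mathrm{div}}\ g(X) + (\widetilde{\mathrm{div}}\ g(X))^\top + g(X)^\top g(X) = -(n-p-1)^2 (X^\top X)^{-1},
\]
so
\[
R(M,\hat{M}_{\mathrm{EM}}) = nI_p - (n-p-1)^2\, {\rm E}_M[(X^\top X)^{-1}] \preceq nI_p.
\]
Since $(X^\top X)^{-1}$ is positive definite almost surely (using $n-p-1>0$) and $R(M,X)=nI_p$, this establishes that $\hat{M}_{\mathrm{EM}}$ dominates the MLE under the matrix quadratic loss, hence is minimax by the remark following the definition.

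The only real obstacle is the bookkeeping in computing $\widetilde{\mathrm{div}}\ g$ without tracing; once the chain rule for $W^{-1}$ is applied and the sum over $a$ is performed, the three terms collapse into a single scalar multiple of $(X^\top X)^{-1}$, and the remainder is automatic. One should also briefly justify the weak differentiability of $g$ and the integrability needed to apply Theorem~\ref{th_sure} (finite expectation of $(X^\top X)^{-1}$ when $n-p-1>0$).
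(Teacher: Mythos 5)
Your proposal is correct and follows essentially the same route as the paper: write $\hat{M}_{\mathrm{EM}}=X+g(X)$ with $g(X)=-(n-p-1)X(X^{\top}X)^{-1}$, compute $\widetilde{\mathrm{div}}\,g(X)=-(n-p-1)^2(X^{\top}X)^{-1}$ via the derivative of $(X^{\top}X)^{-1}$, and conclude $R(M,\hat{M}_{\mathrm{EM}})=nI_p-(n-p-1)^2\,\mathrm{E}_M[(X^{\top}X)^{-1}]\preceq nI_p$ from Theorem~\ref{th_sure}. The contractions $\sum_a(XW^{-1}X^{\top})_{aa}=p$ and $\sum_a(XW^{-1})_{ai}(XW^{-1})_{aj}=(W^{-1})_{ij}$ match the paper's index computation exactly.
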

\begin{proof}
	Let $g(X) = -(n-p-1) X (X^{\top} X)^{-1}$ so that $\hat{M}_{{\rm EM}}=X+g(X)$.

	To calculate $\widetilde{\mathrm{div}} \ g(X)$, we use the formula \cite{Magnus}
	\begin{align*}
	\frac{\partial}{\partial X_{ai}} (X^{\top} X)^{-1} = -(X^{\top} X)^{-1} (X^{\top} E_{ai} + E_{ai}^{\top} X) (X^{\top} X)^{-1},
	\end{align*}
	where $E_{ai} \in \mathbb{R}^{n \times p}$ is the matrix unit with 1 in the $(a,i)$-th entry and 0s elsewhere.
	Then,
	\begin{align*}
	&\frac{\partial}{\partial X_{ai}} (X (X^{\top} X)^{-1})_{aj} \\
	=& \frac{\partial}{\partial X_{ai}} \sum_k X_{ak} ((X^{\top} X)^{-1})_{kj} \\
	=& ((X^{\top} X)^{-1})_{ij} - \sum_{k,l,m}  X_{ak} ((X^{\top} X)^{-1})_{kl} (X^{\top} E_{ai} + E_{ai}^{\top} X)_{lm} ((X^{\top} X)^{-1})_{mj} \\
	=& ((X^{\top} X)^{-1})_{ij} - \sum_{k,l,m}  X_{ak} ((X^{\top} X)^{-1})_{kl} (\delta_{im} X_{al} + \delta_{il} X_{am}) ((X^{\top} X)^{-1})_{mj},
	\end{align*}
    where $\delta_{ij}$ is the Kronecker delta.
	Thus,
	\begin{align*}
	&\sum_a \frac{\partial}{\partial X_{ai}} (X (X^{\top} X)^{-1})_{aj} \\
	=& n ((X^{\top} X)^{-1})_{ij} - \sum_{k,l,m} ((X^{\top} X)^{-1})_{kl} (\delta_{im} (X^{\top} X)_{kl} + \delta_{il} (X^{\top} X)_{km}) ((X^{\top} X)^{-1})_{mj} \\
	=& (n-p-1) ((X^{\top} X)^{-1})_{ij}.	
	\end{align*}
	Therefore,
	\begin{align*}
	\widetilde{\mathrm{div}} \ g(X) = - (n-p-1)^2 (X^{\top} X)^{-1}.
	\end{align*}
	
	Also,
	\begin{align*}
	g(X)^{\top} g(X) = (n-p-1)^2 (X^{\top} X)^{-1}.
	\end{align*}
	
	Therefore, from Theorem~\ref{th_sure}, 
	\begin{align}\label{EM_risk}
	R(M,\hat{M}_{\mathrm{EM}}) = n I_p - (n-p-1)^2 {\rm E}_M [(X^{\top} X)^{-1}] \preceq n I_p,
	\end{align}
	which means that $\hat{M}_{{\rm EM}}$ is minimax.
\end{proof}

\begin{corollary}\label{cor_EM}
    The matrix quadratic risk of the Efron--Morris estimator at $M=O$ is 
    \begin{align*}
        R(O,\hat{M}_{\mathrm{EM}}) = (p+1) I_p.        
    \end{align*}
\end{corollary}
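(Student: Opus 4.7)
The plan is to specialize the risk formula \eqref{EM_risk} proved in the theorem to $M = O$. Setting $M = O$ gives
\[
R(O,\hat{M}_{\mathrm{EM}}) = n I_p - (n-p-1)^2 \, {\rm E}_O[(X^{\top} X)^{-1}],
\]
so the only nontrivial task is to evaluate ${\rm E}_O[(X^{\top} X)^{-1}]$.

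When $M = O$, the rows of $X$ are i.i.d.\ $N_p(0, I_p)$, hence $X^{\top} X$ follows the standard Wishart distribution $W_p(n, I_p)$. For the inverse Wishart the classical moment formula gives
\[
{\rm E}_O[(X^{\top} X)^{-1}] = \frac{1}{n-p-1}\, I_p,
\]
which is valid precisely under the standing assumption $n - p - 1 > 0$. (Alternatively, by the orthogonal invariance of the $N_{n,p}(O, I_n, I_p)$ distribution, ${\rm E}_O[(X^{\top}X)^{-1}]$ must be a scalar multiple of $I_p$, and the scalar is identified by taking trace and using ${\rm E}[\operatorname{tr}(X^\top X)^{-1}] = p/(n-p-1)$.)

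Substituting this into the risk formula yields
\[
R(O,\hat{M}_{\mathrm{EM}}) = n I_p - (n-p-1)^2 \cdot \frac{1}{n-p-1} I_p = \bigl(n - (n-p-1)\bigr) I_p = (p+1) I_p,
\]
as claimed. There is no real obstacle here: the argument is a one-line substitution, and the only ingredient beyond the theorem is the well-known inverse-Wishart mean, whose assumption $n - p - 1 > 0$ coincides with the hypothesis under which the Efron–Morris estimator is defined.
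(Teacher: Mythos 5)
Your proof is correct and follows essentially the same route as the paper: specialize the risk identity \eqref{EM_risk} to $M=O$, note that $(X^{\top}X)^{-1}$ is inverse Wishart, and apply the mean formula ${\rm E}[(X^{\top}X)^{-1}]=(n-p-1)^{-1}I_p$, valid since $n-p-1>0$. The invariance-plus-trace remark is a nice optional justification of the same Wishart fact, but the substance matches the paper's proof.
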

\begin{proof}
    When $M=O$, the matrix $(X^{\top} X)^{-1}$ follows the inverse Wishart distribution $W^{-1}(n,I_p)$.
    Thus, from the formula for the mean of the inverse Wishart distribution \cite{Gupta},
	\begin{align*}
	{\rm E}_{M=O} [(X^{\top} X)^{-1}] = (n-p-1)^{-1} I_p.
	\end{align*}
    Therefore, from \eqref{EM_risk},
	\begin{align*}
	R(O,\hat{M}_{\mathrm{EM}}) = n I_p - (n-p-1)^2 {\rm E}_{M=O} [(X^{\top} X)^{-1}] = (p+1) I_p.
	\end{align*}
\end{proof}

We will study the matrix quadratic risk of the Efron--Morris estimator numerically in Section 5.

\begin{remark}
	Matsuda and Komaki~\cite{Matsuda2} developed an empirical Bayes method for matrix completion based on the Efron--Morris estimator.
	It is an interesting future problem to investigate its performance in terms of the matrix quadratic loss.
\end{remark}

\section{Matrix superharmonicity}
In this section, we introduce a notion of \textit{matrix superharmonicity} for matrix-variate functions, which will be used to derive Bayes minimax estimators in the next section.

First, we review the definition of a superharmonic function \cite[Definition~3.3.3]{Helms}.
Let $S_{x,r}=\{ x+re \mid e \in \mathbb{R}^n, \| e \| = 1 \} \subset \mathbb{R}^n$ be the sphere with center $x$ and radius $r>0$.
For a function $f: \mathbb{R}^{n} \to \mathbb{R} \cup \{ \infty \}$, let
\begin{align*}
L(f: x, r)= \frac{1}{\Omega_n r^{n-1}} \int_{S_{x,r}} f(z) {\rm d} s(z) = \frac{1}{\Omega_n} \int_{S_{0,1}} f(x+re) {\rm d} s(e)
\end{align*}
be the average value of $f$ on $S_{x,r}$, where ${\rm d} s$ denotes the surface area element and $\Omega_n$ is the surface area of the unit sphere $S_{0,1}$ in $\mathbb{R}^n$.
Then, $f$ is said to be superharmonic if it satisfies the following:

\begin{enumerate}
	\item $f$ is lower semicontinuous;
	
	\item $f \not\equiv \infty$;
	
	\item $L(f: x, r)\leq f(x)$ for every $x \in \mathbb{R}^n$ and $r>0$.
\end{enumerate}

Now, we introduce matrix superharmonicity.
For a matrix-variate function $f: \mathbb{R}^{n \times p} \to \mathbb{R} \cup \{ \infty \}$, we can define its superharmonicity as the superharmonicity of $f \circ \mathrm{vec}^{-1}: \mathbb{R}^{np} \to \mathbb{R} \cup \{ \infty \}$, where $\mathrm{vec}: \mathbb{R}^{n \times p} \to \mathbb{R}^{np}$ is the vectorization operator \cite{Gupta}.
However, such a definition does not take into account the matrix structure.
We propose a stronger version of superharmonicity for matrix-variate functions, which we refer to as matrix superharmonicity.
For $X \in \mathbb{R}^{n \times p}$ and $\rho \in \mathbb{R}^p$, let $S_{X,\rho}=\{ X+e \rho^{\top} \mid e \in \mathbb{R}^n, \| e \| = 1 \}$ and
\begin{align*}
L(f: X, \rho)= \frac{1}{\Omega_n} \int_{S_{0,1}} f(X+e \rho^{\top}) {\rm d} s(e)
\end{align*}
be the average value of $f$ on $S_{X,\rho}$.
{Note that we take average over only rank one perturbations around $X$ here.}

\begin{definition}
	A matrix-variate function $f: \mathbb{R}^{n \times p} \to \mathbb{R} \cup \{ \infty \}$ is said to be matrix superharmonic if it satisfies the following:
	
	\begin{enumerate}
		\item $f$ is lower semicontinuous;
		
		\item $f \not\equiv \infty$;
		
		\item $L(f: X, \rho) \leq f(X)$ for every $X \in \mathbb{R}^{n \times p}$ and $\rho \in \mathbb{R}^p$.
	\end{enumerate}
\end{definition}

We will provide examples of matrix superharmonic functions in Sections 4.3 and 4.4.

\begin{proposition}\label{prop_super}
	If a function $f: \mathbb{R}^{n \times p} \to \mathbb{R} \cup \{ \infty \}$ is matrix superharmonic, then $f \circ \mathrm{vec}^{-1}$ is superharmonic.
\end{proposition}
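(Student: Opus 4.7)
The plan is as follows. Since $\mathrm{vec}:\mathbb{R}^{n\times p}\to\mathbb{R}^{np}$ is a linear homeomorphism, lower semicontinuity and $f\not\equiv\infty$ transfer trivially from $f$ to $f\circ\mathrm{vec}^{-1}$; what must be shown is the spherical submean inequality
\begin{align*}
\frac{1}{\Omega_{np}}\int_{\|H\|_F=1} f(X+rH)\,ds(H) \le f(X) \qquad (X\in\mathbb{R}^{n\times p},\ r>0),
\end{align*}
where $H$ ranges over the Frobenius unit sphere of $\mathbb{R}^{n\times p}\cong\mathbb{R}^{np}$.

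The idea is that, although matrix superharmonicity only controls averages of rank-one perturbations $e\rho^\top$, a uniformly random element of the Frobenius sphere naturally decomposes into $p$ such pieces, one per column. Concretely, I would generate a uniform $H$ of Frobenius norm $r$ by taking independent $G_1,\ldots,G_p\sim {\rm N}(0,I_n)$ and setting $H_j := rG_j/\bigl(\sum_k\|G_k\|^2\bigr)^{1/2}$. By the standard independence of norm and direction for an isotropic Gaussian, the column directions $\omega_j:=H_j/\|H_j\|=G_j/\|G_j\|$ are i.i.d.\ uniform on the unit sphere of $\mathbb{R}^n$ and are independent of the squared column-norm profile $(s_1,\ldots,s_p):=(\|H_j\|^2)$, which lies on the simplex $\sum_js_j=r^2$.

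Conditional on $(s_j)$ I iterate column by column. Put $Y_0:=X$ and $Y_j:=Y_{j-1}+\sqrt{s_j}\,\omega_j(e_j^{(p)})^\top$, where $e_j^{(p)}$ denotes the $j$-th standard basis vector of $\mathbb{R}^p$, so that $Y_p=X+H$. The $j$-th increment has the rank-one form $\omega_j\rho_j^\top$ with $\rho_j:=\sqrt{s_j}\,e_j^{(p)}$, and since $\omega_j$ is independent of $Y_{j-1}$, matrix superharmonicity applied at $Y_{j-1}$ with this choice of $\rho$ gives
\begin{align*}
{\rm E}_{\omega_j}\bigl[f(Y_j)\bigm| Y_{j-1}\bigr] = L(f:Y_{j-1},\rho_j) \le f(Y_{j-1}).
\end{align*}
Iterated expectations from $j=p$ down to $j=1$ then yield ${\rm E}[f(X+H)\mid(s_j)]\le f(X)$, and averaging over $(s_j)$ produces the desired sphere inequality.

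The only step that really needs checking is the measure-theoretic decomposition of the uniform surface measure on the Frobenius sphere as column directions i.i.d.\ uniform and independent of the column-norm profile; the explicit Gaussian construction makes this transparent. I do not expect any substantive obstacle beyond this. Integrability issues near $\{f=\infty\}$ are handled by the convention that the submean integrals are permitted to take the value $+\infty$, in which case the inequality holds vacuously.
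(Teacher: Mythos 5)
Your proof is correct, and it takes a genuinely different route from the paper's. The paper disposes of the sphere inequality in one line by asserting the identity
\begin{align*}
L(f \circ \mathrm{vec}^{-1}: \mathrm{vec}(X), r) = \frac{1}{\Omega_{p} r^{p-1}} \int_{S_{0,r}} L(f: X, \rho) \, {\rm d} s(\rho),
\end{align*}
with $S_{0,r}$ the radius-$r$ sphere in $\mathbb{R}^p$: the full $\mathbb{R}^{np}$-spherical mean is claimed to equal the average of the rank-one means $L(f:X,\rho)$, after which $L(f:X,\rho)\le f(X)$ finishes the argument in a single step. That identity is exact when $n=1$ or $p=1$, but for $n,p\ge 2$ the perturbations $e\rho^{\top}$ sweep out only the rank-at-most-one part of the Frobenius sphere, a set of dimension $n+p-2<np-1$ and hence of surface measure zero (for $n=p=2$ and $f(H)=(\det H)^2$ the right side vanishes while the left side is positive), so the paper's displayed equality does not hold in the cases of interest. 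Your column-by-column decomposition is the correct repair: you pay with $p$ applications of the sub-mean property (one per column, with $\rho_j=\sqrt{s_j}\,e_j^{(p)}$) instead of one, and the only real work is the fact you isolate --- that the uniform measure on the Frobenius sphere has column directions i.i.d.\ uniform on the unit sphere of $\mathbb{R}^n$ and independent of the column-norm profile --- which your Gaussian construction establishes cleanly. The iterated conditioning is sound because lower semicontinuity bounds $f$ below on the compact sphere, so all expectations are well defined in $(-\infty,\infty]$ and monotone under conditioning. Do spell out the independence claim when you write this up; it is the one nontrivial measure-theoretic input, and it is exactly what the paper's one-line identity implicitly (and incorrectly) replaces by a rank-one parametrization of the sphere.
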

\begin{proof}
	For every $X \in \mathbb{R}^{n \times p}$ and $r > 0$, 
\begin{align*}
	L(f \circ \mathrm{vec}^{-1}: \mathrm{vec} (X), r) = \frac{1}{\Omega_{p} r^{p-1}} \int_{S_{0,r}} L(f: X, \rho) {\rm d} s(\rho).
\end{align*}
    Since $L(f: X, \rho) \leq f(X)$ for every $\rho$ from the matrix superharmonicity of $f$,
\begin{align*}
	L(f \circ \mathrm{vec}^{-1}: \mathrm{vec} (X), r) \leq f(X).
\end{align*}
\end{proof}

The converse of Proposition~\ref{prop_super} does not hold when $p \geq 2$.
One counterexample is $f(X)=\| X \|_{\mathrm{F}}^{2-np}$ as we will show in Proposition~\ref{prop_stein}.

\begin{remark}
When $n=1$, $S_{X,\rho}=\{ X-\rho, X+\rho \}$ and thus the third condition of matrix superharmonicity reduces to midpoint convexity:
\begin{align*}
	L(f: X, \rho)= \frac{f(X+\rho)+f(X-\rho)}{2} \leq f(X),
\end{align*}
	which is equivalent to usual convexity for a lower semicontinuous function $f$ on $\mathbb{R}^p$.
	On the other hand, the definition of midpoint convexity is not unique for functions defined on discrete space \cite{Murota}.
	It may be interesting to investigate discrete analogue of matrix superharmonicity and its applications.
\end{remark}

We provide a characterization of matrix superharmonicity for $C^2$ functions.
Recall that a $C^2$ function $f: \mathbb{R}^{n} \to \mathbb{R}$ is superharmonic if and only if its Laplacian is nonpositive: 	    
\begin{align*}
	{\Delta} f(x) = \sum_{a=1}^n \frac{\partial^2}{\partial x_a^2} f(x) \leq 0
	\end{align*}
for every $x$ \cite[Lemma~3.3.4]{Helms}.
This property is extended to matrix superharmonic functions by using a matrix version of the Laplacian.

\begin{definition}
	For a $C^2$ function $f: \mathbb{R}^{n \times p} \to \mathbb{R}$, its {matrix Laplacian} $\widetilde{\Delta} f: \mathbb{R}^{n \times p} \to \mathbb{R}^{p \times p}$ is defined as 
	\begin{align*}
	(\widetilde{\Delta} f(X))_{ij} = \sum_{a=1}^n \frac{\partial^2}{\partial X_{ai} \partial X_{aj}} f(X).
	\end{align*}
\end{definition}

\begin{theorem}\label{th_c2}
	A $C^2$ function $f: \mathbb{R}^{n \times p} \to \mathbb{R}$ is matrix superharmonic if and only if its matrix Laplacian is negative semidefinite $\widetilde{\Delta} f(X) \preceq O$ for every $X$.
\end{theorem}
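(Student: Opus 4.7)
The plan is to reduce matrix superharmonicity to ordinary superharmonicity by restricting $f$ to the $n$-dimensional affine subspace of rank-one perturbations. Since $f$ is $C^2$, the lower semicontinuity condition and the $f \not\equiv \infty$ condition are automatic, so only the spherical average inequality $L(f: X, \rho) \leq f(X)$ needs to be characterized.

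For the "if" direction, I would fix $X \in \mathbb{R}^{n \times p}$ and $\rho \in \mathbb{R}^p$, and define the auxiliary function $u(e) := f(X + e \rho^{\top})$ on $\mathbb{R}^n$. A direct chain-rule computation gives
\begin{align*}
\frac{\partial^2 u}{\partial e_a^2}(e) = \sum_{i,j} \rho_i \rho_j \frac{\partial^2 f}{\partial X_{ai} \partial X_{aj}}(X + e\rho^{\top}),
\end{align*}
so $\Delta_e u(e) = \rho^{\top} \widetilde{\Delta} f(X + e \rho^{\top}) \rho$. If $\widetilde{\Delta} f \preceq O$ everywhere, then $\Delta_e u \leq 0$ on $\mathbb{R}^n$, hence $u$ is superharmonic by the classical $C^2$ characterization. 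The spherical mean inequality for $u$ at the origin with radius $1$ then reads $L(u: 0, 1) \leq u(0)$, which upon unwinding the definition is exactly $L(f: X, \rho) \leq f(X)$.

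For the "only if" direction, suppose $f$ is matrix superharmonic. Fix $X$ and $\rho$, and apply condition (3) with $\rho$ replaced by $t\rho$ for $t > 0$: this gives $L(u: 0, t) \leq u(0)$ where $u$ is as above, because
\begin{align*}
L(f: X, t \rho) = \frac{1}{\Omega_n} \int_{S_{0,1}} f(X + t e \rho^{\top}) \, {\rm d} s(e) = L(u: 0, t).
\end{align*}
Now Taylor-expand $u$ around $0$ and use the standard spherical moment identity $\Omega_n^{-1} \int_{S_{0,1}} e_a e_b \, {\rm d} s(e) = \delta_{ab}/n$, together with vanishing of the odd linear term, to obtain
\begin{align*}
L(u: 0, t) = u(0) + \frac{t^2}{2n} \Delta u(0) + o(t^2).
\end{align*}
Rearranging and letting $t \to 0^{+}$ yields $\Delta u(0) \leq 0$, and by the chain-rule computation this reads $\rho^{\top} \widetilde{\Delta} f(X) \rho \leq 0$. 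Since $\rho$ is arbitrary, $\widetilde{\Delta} f(X) \preceq O$.

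I expect no serious obstacle: the key insight is simply that a rank-one perturbation $e \mapsto X + e\rho^{\top}$ embeds $\mathbb{R}^n$ into $\mathbb{R}^{n \times p}$ in a way that turns the matrix Laplacian into the scalar Laplacian along the fiber, i.e., $\Delta_e (f \circ (\cdot))(e) = \rho^{\top} \widetilde{\Delta} f \rho$. Once this observation is in place, both directions are essentially citations of the corresponding scalar result (Lemma 3.3.4 of Helms for "if", and the Taylor-expansion derivation of the infinitesimal mean value for "only if"). The mildly delicate point is making the Taylor-remainder estimate uniform enough to justify dividing by $t^2$ and passing to the limit, but since $f$ is $C^2$ this is a standard computation.
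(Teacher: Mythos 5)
Your proposal is correct and follows essentially the same route as the paper: both directions hinge on the observation that along the rank-one fiber $e \mapsto X + e\rho^{\top}$ the matrix Laplacian collapses to the scalar Laplacian via $\Delta_e u = \rho^{\top}\widetilde{\Delta} f\,\rho$, with the ``only if'' direction handled by the identical Taylor expansion and spherical moment identity. The only cosmetic difference is that for the ``if'' direction you cite the classical $C^2$ characterization of superharmonicity, whereas the paper inlines its proof via Green's identity on an annulus.
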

\begin{proof}
	Assume that $f$ is matrix superharmonic.
	Let $e \in \mathbb{R}^n$ be a unit vector.
	From Taylor's theorem,
	\begin{align*}
	&f(X+e \rho^{\top}) \\
	=& f(X) + \sum_{a,i} \frac{\partial f}{\partial X_{ai}} (X) e_a \rho_i + \frac{1}{2} \sum_{a,i,b,j} \frac{\partial^2 f}{\partial X_{ai} \partial X_{bj}} (X) e_a \rho_i e_b \rho_j + o(\| \rho \|^2).
	\end{align*}
	Thus, 
	\begin{align*}
L(f : X, \rho) &= \frac{1}{\Omega_n} \int_{S_{0,1}} f(X+e \rho^{\top}) {\rm d} s(e) \\
	&= f(X) + \frac{1}{2 n} \sum_{a,i,j} \frac{\partial^2 f}{\partial X_{ai} \partial X_{aj}} (X) \rho_i \rho_j + o(\| \rho \|^2) \\
    &= f(X) + \frac{1}{2n} \rho^{\top} \widetilde{\Delta} f(X) \rho + o(\| \rho \|^2),
    \end{align*}
    where we used
	\begin{align*}
	\frac{1}{\Omega_n} \int_{S_{0,1}} e_a {\rm d} s(e) = 0, \quad \frac{1}{\Omega_n} \int_{S_{0,1}} e_a e_b {\rm d} s(e) = \frac{1}{n} \delta_{ab}.
    \end{align*}
    On the other hand, $L(f: X, \rho) \leq f(X)$ from the matrix superharmonicity of $f$.
    Therefore, we must have 
	\begin{align*}
	\rho^{\top} \widetilde{\Delta} f(X) \rho \leq 0
	\end{align*}
    for every $\rho$.
    Hence, $\widetilde{\Delta} f(X) \preceq O$ for every $X$.

	Conversely, assume that $\widetilde{\Delta} f(X) \preceq O$ for every $X$.
    For arbitrary $X$ and $\rho$, let $\bar{f}(e)=f(X+e \rho^{\top})$.
	Then,
	\begin{align*}
	{\Delta} \bar{f}(e) &= \sum_a \frac{\partial^2 \bar{f}}{\partial e_a^2} = \sum_{a,i} \frac{\partial}{\partial e_a} \left( \frac{\partial f}{\partial X_{ai}} \rho_i \right) = \sum_{a,i,j} \rho_i \frac{\partial^2 f}{\partial X_{ai} \partial X_{aj}} \rho_j = \rho^{\top} \widetilde{\Delta} f (X+e \rho^{\top}) \rho \leq 0.
	\end{align*}
	Let $C_{\eta} = \{ e \in \mathbb{R}^n \mid \eta < \| e \| < 1 \}$ and $\bar{g}(e)=\| e \|^{2-n}-1$.
	Then, $\bar{g} \geq 0$ and $\Delta \bar{g}=0$ on $C_{\eta}$.
	Therefore, from Green's theorem,
	\begin{align*}
	\int_{\partial C_{\eta}} \left( \bar{f} D_n \bar{g} - \bar{g} D_n \bar{f} \right) {\rm d} s(e) = \int_{C_{\eta}} (\bar{f} \Delta \bar{g} - \bar{g} \Delta \bar{f}) {\rm d} e \geq 0,
	\end{align*}
	where $\partial C_{\eta} = S_{0,\eta} \cup S_{0,1}$ is the boundary of $C_{\eta}$ and $D_n \bar{f}$ is the directional derivative of $\bar{f}$ in the direction of the outer normal unit vector.
	On the other hand, since $D_n \bar{g} = -(2-n) \eta^{1-n}$ on $S_{0,\eta}$ and $D_n \bar{g} = 2-n$ on $S_{0,1}$,
	\begin{align*}
	\int_{\partial C_{\eta}} \bar{f} D_n \bar{g} {\rm d} s(e) &= - (2-n) \eta^{1-n} \int_{S_{0,\eta}} \bar{f} {\rm d} s(e) + (2-n) \int_{S_{0,1}} \bar{f} {\rm d} s(e) \\
    &= - (2-n) \Omega_n L(f: X, \eta \rho) + (2-n) \Omega_n L(f: X, \rho).	    
    \end{align*}
	Also, since $\bar{g}=\eta^{2-n}-1$ on $S_{0,\eta}$ and $\bar{g}=0$ on $S_{0,1}$,
	\begin{align*}
	\int_{\partial C_{\eta}} \bar{g} D_n \bar{f} {\rm d} s(e) = (\eta^{2-n}-1) \int_{S_{0,\eta}} D_n \bar{f} {\rm d} s(e),
	\end{align*}
	which is $O(\eta)$ as $\eta \to 0$.
	Therefore, 
	\begin{align*}
	L(f: X, \rho) \leq L(f: X, \eta \rho) + O(\eta).
	\end{align*}
	By taking $\eta \to 0$, we obtain $L(f: X, \rho) \leq f(X)$.
	Since $X$ and $\rho$ are arbitrary, $f$ is matrix superharmonic.
\end{proof}

The limit of an increasing sequence of superharmonic functions is also superharmonic \cite[Theorem~3.4.8]{Helms}.
Matrix superharmonic functions have a similar property.

\begin{lemma}\label{lem_conv}
	Let $f_1 \leq f_2 \leq \dots$ be an increasing sequence of matrix superharmonic functions and assume that $f = \lim_{k \to \infty} f_k  \not\equiv \infty$.
	Then, $f$ is also matrix superharmonic.
\end{lemma}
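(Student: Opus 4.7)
The plan is to verify the three defining conditions of matrix superharmonicity for the pointwise limit $f$. Condition 2 is given by hypothesis. Condition 1 (lower semicontinuity) follows from the classical fact that the pointwise supremum of a family of lower semicontinuous functions is lower semicontinuous; here $f = \sup_k f_k$ because the sequence is increasing, and each $f_k$ is lower semicontinuous by matrix superharmonicity.

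The main work lies in condition 3, the spherical mean inequality $L(f : X, \rho) \leq f(X)$ for arbitrary $X \in \mathbb{R}^{n \times p}$ and $\rho \in \mathbb{R}^p$. First I would use matrix superharmonicity of each $f_k$ to get
\begin{equation*}
L(f_k : X, \rho) \leq f_k(X) \leq f(X).
\end{equation*}
Then I would interchange the limit and the spherical integral defining $L$. The cleanest way is via the monotone convergence theorem applied to the integrals $\int_{S_{0,1}} f_k(X + e\rho^{\top})\, \mathrm{d}s(e)$. To make this rigorous I need a uniform lower bound on the integrand; this is where lower semicontinuity of $f_1$ pays off: the unit sphere $S_{0,1}$ is compact and $e \mapsto f_1(X + e\rho^{\top})$ is lower semicontinuous with values in $\mathbb{R} \cup \{\infty\}$, so it attains a finite infimum $-M$ on $S_{0,1}$. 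Consequently $f_k(X + e\rho^{\top}) + M \geq 0$ for all $k$ and all $e \in S_{0,1}$, and this nonnegative sequence increases pointwise to $f(X + e\rho^{\top}) + M$. Monotone convergence then yields
\begin{equation*}
L(f : X, \rho) = \lim_{k \to \infty} L(f_k : X, \rho) \leq \lim_{k \to \infty} f_k(X) = f(X),
\end{equation*}
which is exactly condition 3.

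The only mildly delicate step is the one just described, namely securing an integrable minorant on the sphere in order to swap limit and integral. Once that lower bound is in place, the argument is a direct transcription of the classical proof that an increasing limit of superharmonic functions (when not identically $\infty$) is superharmonic; nothing specific to the rank-one structure of $S_{X,\rho}$ is needed beyond the compactness of $S_{0,1}$ used in the parametrization.
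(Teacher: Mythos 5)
Your proof is correct and follows essentially the same route as the paper: lower semicontinuity of $f$ as a supremum of lower semicontinuous functions, then interchange of limit and spherical integral to pass the mean-value inequality to the limit. The only difference is that the paper cites Lemma~3.2.10 of \cite{Helms} for the interchange, whereas you justify it directly via monotone convergence with the finite lower bound supplied by lower semicontinuity of $f_1$ on the compact sphere --- which is precisely the content of that cited lemma.
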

\begin{proof}
	Since each $f_k$ is lower semicontinuous, their supremum $f$ is also lower semicontinuous.
    Also, from Lemma~3.2.10 of \cite{Helms},
	\begin{align*}
    \int_{S_{0,1}} f(X+e \rho^{\top}) {\rm d} s(e) = \lim_{k \to \infty} \int_{S_{0,1}} f_k(X+e \rho^{\top}) {\rm d} s(e).
	\end{align*}
	Therefore,
	\begin{align*}
	L(f: X, \rho) = \lim_{k \to \infty} L(f_k: X, \rho) \leq \lim_{k \to \infty} f_k(X) = f(X),
	\end{align*}
    where we used the matrix superharmonicity of each $f_k$.
\end{proof}

\section{Bayes estimation with matrix superharmonic prior}
In this section, we investigate Bayes shrinkage estimation under the matrix quadratic loss.

\subsection{Uniqueness of Bayes estimator}
Let
\begin{align}\label{marginal}
m_{\pi} (X) = \int p(X \mid M) \pi (M) {\rm d} M
\end{align}
be the marginal distribution of $X \sim {\rm N}_{n,p} (M,I_n,I_p)$ with prior $\pi(M)$.

Similarly to the Frobenius loss, the (generalized) Bayes estimator under the matrix quadratic loss is uniquely given by the posterior mean, even though the matrix quadratic loss is only partially ordered.

\begin{definition}
	For a function $f: \mathbb{R}^{n \times p} \to \mathbb{R}$, its \textrm{matrix gradient} $\widetilde{\nabla} f: \mathbb{R}^{n \times p} \to \mathbb{R}^{n \times p}$ is defined as 
	\begin{align*}
	(\widetilde{\nabla} f(X))_{ai} = \frac{\partial}{\partial X_{ai}} f(X).
	\end{align*}
\end{definition}

\begin{lemma}
	If $m_{\pi}(X)<\infty$ for every $X$, then the (generalized) Bayes estimator of $M$ with respect to a prior $\pi(M)$ under the matrix quadratic loss is uniquely given by the posterior mean:
	\begin{align*}
	\hat{M}^{\pi} (X) = \mathrm{E}_{\pi}[M \mid X] = \frac{\int M p(X \mid M) \pi(M) {\rm d} M}{\int p(X \mid M) \pi(M) {\rm d} M} = X + \widetilde{\nabla} \log m_{\pi}(X).
	\end{align*}
\end{lemma}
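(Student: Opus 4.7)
The plan is to prove two assertions in tandem: first, that the posterior mean uniquely minimizes the posterior risk in the Löwner order; second, the Tweedie-type identity $\mathrm{E}_\pi[M \mid X] = X + \widetilde{\nabla} \log m_\pi(X)$.

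For Löwner-uniqueness, I would set $M^{\ast}(X) := \mathrm{E}_\pi[M \mid X]$ and, for an arbitrary estimator $\hat M(X)$, perform a completion-of-squares expansion
\begin{align*}
(\hat M - M)^\top (\hat M - M) &= (\hat M - M^{\ast})^\top (\hat M - M^{\ast}) + (M^{\ast} - M)^\top (M^{\ast} - M) \\
&\quad + (\hat M - M^{\ast})^\top (M^{\ast} - M) + (M^{\ast} - M)^\top (\hat M - M^{\ast}).
\end{align*}
Taking the posterior expectation given $X$, the two cross terms vanish by the defining property $\mathrm{E}_\pi[M^{\ast} - M \mid X] = O$ of the posterior mean, leaving
\begin{align*}
\mathrm{E}_\pi[(\hat M - M)^\top (\hat M - M) \mid X] = (\hat M - M^{\ast})^\top (\hat M - M^{\ast}) + \mathrm{E}_\pi[(M^{\ast} - M)^\top (M^{\ast} - M) \mid X].
\end{align*}
Only the first summand depends on $\hat M$. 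Since any matrix of the form $A^\top A$ is positive semidefinite and vanishes if and only if $A = O$, the difference of posterior risk matrices is $\succeq O$, with equality only when $\hat M = M^{\ast}$. Integrating against the marginal then lifts this to the Bayes risks and delivers both minimization and uniqueness.

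For the Tweedie-type representation, I would start from the identity $\widetilde{\nabla}_X p(X \mid M) = -(X - M)\, p(X \mid M)$, which rearranges to $M\, p(X \mid M) = X\, p(X \mid M) + \widetilde{\nabla}_X p(X \mid M)$. Integrating against $\pi(M)\, \mathrm{d}M$ and interchanging $\widetilde{\nabla}$ with the integral (justified under the assumption $m_\pi(X) < \infty$ by dominated convergence using the Gaussian decay of the kernel in $M$) yields
\begin{align*}
\int M\, p(X \mid M) \pi(M)\, \mathrm{d}M = X\, m_\pi(X) + \widetilde{\nabla} m_\pi(X),
\end{align*}
and dividing by $m_\pi(X)$ produces the claimed formula.

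The genuinely delicate point is the uniqueness statement, because the Löwner order is only partial and a priori several distinct estimators might be Löwner-incomparable minima. The completion-of-squares identity resolves this cleanly: the residual $(\hat M - M^{\ast})^\top (\hat M - M^{\ast})$ is not merely positive semidefinite but is zero if and only if $\hat M = M^{\ast}$, so the posterior mean is the unique candidate. The interchange of $\widetilde{\nabla}$ and the integral is a routine dominated-convergence argument for the Gaussian kernel and presents no substantial obstacle.
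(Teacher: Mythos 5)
Your completion-of-squares decomposition of the posterior risk, with the cross terms vanishing because $\mathrm{E}_{\pi}[M^{\ast}-M \mid X]=O$ and the residual $D^{\top}D$ vanishing iff $\hat{M}=M^{\ast}$, is exactly the argument the paper gives. The Tweedie-type identity you derive via $\widetilde{\nabla}_X p(X\mid M)=-(X-M)p(X\mid M)$ is correct but is simply stated without proof in the paper, so your proposal matches the paper's route and additionally fills in that standard step.
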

\begin{proof}
	For each estimator $\hat{M}=\hat{M}(X)$, its posterior risk is decomposed as
	\begin{align*}
	&{\rm E}_{\pi} [(\hat{M}(X)-M)^{\top} (\hat{M}(X)-M) \mid X] \\
	=& {\rm E}_{\pi} [(\mathrm{E}_{\pi}[M \mid X]-M)^{\top} (\mathrm{E}_{\pi}[M \mid X]-M)] + D^{\top} D,
	\end{align*}
	where $D = \hat{M}(X)-\mathrm{E}_{\pi}[M \mid X]$.
	Thus, the posterior risk is uniquely minimized under the Lowner order by taking $D=O$, which means that $\hat{M}(X)=\mathrm{E}_{\pi}[M \mid X]$.
\end{proof}

\subsection{Sufficient condition for minimaxity}
Now, we provide a sufficient condition for minimaxity of (generalized) Bayes estimators under the matrix quadratic loss.

\begin{theorem}\label{th_minimax}
    If $\sqrt{m_{\pi}(X)}$ is matrix superharmonic, then the generalized Bayes estimator $\hat{M}^{\pi}(X)=X+\widetilde{\nabla} \log m_{\pi}(X)$ with respect to $\pi(M)$ is minimax under the matrix quadratic loss.
\end{theorem}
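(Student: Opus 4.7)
The plan is to apply Theorem~\ref{th_sure} to the Bayes estimator $\hat{M}^{\pi}=X+g(X)$ with $g(X)=\widetilde{\nabla}\log m_{\pi}(X)$, and reduce the resulting risk correction to a quantity controlled by the matrix Laplacian of $f=\sqrt{m_{\pi}}$. The first observation I would make is that $g$ is the matrix gradient of a scalar, so its partials commute and $\widetilde{\mathrm{div}}\,g(X)$ is symmetric in its indices; hence the risk correction from Theorem~\ref{th_sure} simplifies to $2\,\widetilde{\mathrm{div}}\,g(X)+g(X)^{\top}g(X)$.

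Next I would establish the key identity
\begin{align*}
2\,\widetilde{\mathrm{div}}\,g(X)+g(X)^{\top}g(X)\;=\;\frac{4\,\widetilde{\Delta} f(X)}{f(X)}.
\end{align*}
This comes from the product-rule computation $\partial_{ai}\partial_{aj}f = f\,\partial_{ai}\partial_{aj}(\log f) + f\,(\partial_{ai}\log f)(\partial_{aj}\log f)$, combined with the relation $\log m_{\pi}=2\log f$ (so that $g_{ai}=2 f^{-1}\partial_{ai}f$). Summing over $a$ gives $(\widetilde{\Delta}f)_{ij}/f=\tfrac{1}{2}(\widetilde{\mathrm{div}}\,g)_{ij}+\tfrac{1}{4}(g^{\top}g)_{ij}$, which is the required identity.

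Once this identity is in hand, the rest is quick. Since $\sqrt{m_{\pi}}$ is matrix superharmonic and $m_{\pi}$ is $C^{\infty}$ (being the convolution of the prior with a Gaussian density), Theorem~\ref{th_c2} yields $\widetilde{\Delta}f(X)\preceq O$ on the set where $f>0$, so the correction term is negative semidefinite. Substituting back into Theorem~\ref{th_sure} gives $R(M,\hat{M}^{\pi})\preceq nI_p$, and therefore $c^{\top}R(M,\hat{M}^{\pi})c\leq n\|c\|^{2}$ for every $M$ and $c$. Since $Xc$ is minimax for $Mc$ under quadratic loss with constant risk $n\|c\|^{2}$, the estimator $\hat{M}^{\pi}c$ is also minimax for $Mc$ for every $c$, which by the paper's definition means $\hat{M}^{\pi}$ is minimax under the matrix quadratic loss.

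The main obstacle I anticipate is regularity: matrix superharmonicity of $\sqrt{m_{\pi}}$ is a priori only a lower semicontinuity plus mean-value condition, and the $C^{2}$ characterization of Theorem~\ref{th_c2} is needed to deduce $\widetilde{\Delta}f\preceq O$. The smoothness of $m_{\pi}$ takes care of this where $m_{\pi}>0$, but one has to be careful on the zero set of $m_{\pi}$ (where $\log m_{\pi}$ and $g$ blow up), and one also needs to verify the weak-differentiability hypothesis of Theorem~\ref{th_sure} for $g=\widetilde{\nabla}\log m_{\pi}$. Both issues are standard in the Stein-shrinkage literature; if necessary they can be addressed by a smooth approximation argument using Lemma~\ref{lem_conv}, or by checking integrability of $g$ directly against the Gaussian likelihood.
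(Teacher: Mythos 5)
Your proposal follows essentially the same route as the paper: apply Theorem~\ref{th_sure} with $g=\widetilde{\nabla}\log m_{\pi}$, use the identity $2\widetilde{\Delta}\log m_{\pi}+(\widetilde{\nabla}\log m_{\pi})^{\top}(\widetilde{\nabla}\log m_{\pi})=4\,\widetilde{\Delta}\sqrt{m_{\pi}}/\sqrt{m_{\pi}}$, and invoke Theorem~\ref{th_c2} to conclude the correction term is negative semidefinite. The paper states this identity in one line without the product-rule derivation or the regularity caveats you raise, but the argument is the same.
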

\begin{proof}
	From Theorem~\ref{th_sure}, the matrix quadratic risk of the generalized Bayes estimator $\hat{M}^{\pi}$ is given by
	\begin{align*}
	R(M,\hat{M}^{\pi}) &= n I_p + {\rm E}_M [2 \widetilde{\Delta} \log m_{\pi}(X) + (\widetilde{\nabla} \log m_{\pi}(X))^{\top} (\widetilde{\nabla} \log m_{\pi}(X))]  \\
	&= n I_p  + 4 {\rm E}_M \left[ \frac{\widetilde{\Delta} \sqrt{m_{\pi}(X)}}{\sqrt{m_{\pi}(X)}} \right].
	\end{align*}
	Since $\sqrt{m_{\pi}(X)}$ is matrix superharmonic and $C^2$ by definition, $\widetilde{\Delta} \sqrt{m_{\pi}(X)} \preceq O$ for every $X$ from Theorem~\ref{th_c2}.
	Therefore, $R(M,\hat{M}^{\pi}) \preceq n I_p$ and thus $\hat{M}^{\pi}$ is minimax.
\end{proof}

We also provide another sufficient condition based on the matrix superharmonicity of prior itself.

\begin{lemma}\label{lem_m}
	If $\pi(M)$ is matrix superharmonic and $m_{\pi}(X) < \infty$ for every $X$, then ${m_{\pi}(X)}$ is also matrix superharmonic.
\end{lemma}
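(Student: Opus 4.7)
The plan is to represent $m_{\pi}$ as a convolution of $\pi$ with the standard matrix-variate normal density and then push the matrix-superharmonicity inequality for $\pi$ through the integral by a Tonelli-type argument.

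Specifically, since $p(X \mid M) = \phi(X-M)$ where $\phi(Y) = (2\pi)^{-np/2}\exp(-\|Y\|_{\mathrm{F}}^2/2)$, the change of variables $Y = X - M$ in \eqref{marginal} gives
\begin{align*}
m_{\pi}(X) = \int \phi(Y)\,\pi(X-Y)\,\mathrm{d}Y.
\end{align*}
Fix $X \in \mathbb{R}^{n \times p}$ and $\rho \in \mathbb{R}^p$. Since $\phi \geq 0$ and $\pi \geq 0$, Tonelli's theorem lets me swap the sphere integral over $e \in S_{0,1}$ with the integral over $Y$:
\begin{align*}
L(m_{\pi}: X, \rho)
&= \frac{1}{\Omega_n}\int_{S_{0,1}} \int \phi(Y)\,\pi(X - Y + e\rho^{\top})\,\mathrm{d}Y\,\mathrm{d}s(e) \\
&= \int \phi(Y)\,L(\pi : X - Y,\, \rho)\,\mathrm{d}Y.
\end{align*}
Because $\pi$ is matrix superharmonic, $L(\pi : X - Y, \rho) \leq \pi(X-Y)$ pointwise in $Y$, and integrating against $\phi \geq 0$ preserves the inequality:
\begin{align*}
L(m_{\pi}: X, \rho) \leq \int \phi(Y)\,\pi(X - Y)\,\mathrm{d}Y = m_{\pi}(X).
\end{align*}

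It remains to verify the two bookkeeping clauses in the definition of matrix superharmonicity. Finiteness $m_{\pi} \not\equiv \infty$ is immediate from the hypothesis $m_{\pi}(X) < \infty$ for every $X$. For lower semicontinuity, I would apply Fatou's lemma: if $X_k \to X$, then lower semicontinuity of $\pi$ gives $\liminf_k \pi(X_k - Y) \geq \pi(X-Y)$ for each $Y$, so
\begin{align*}
\liminf_{k \to \infty} m_{\pi}(X_k) \geq \int \phi(Y)\liminf_{k\to\infty} \pi(X_k - Y)\,\mathrm{d}Y \geq m_{\pi}(X).
\end{align*}

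The only mildly subtle step is the appeal to Tonelli, which is legitimate because the integrand $\phi(Y)\pi(X - Y + e\rho^{\top})$ is nonnegative and measurable on $S_{0,1} \times \mathbb{R}^{n \times p}$; everything else is a routine translation of the scalar superharmonic-convolution argument to the matrix setting, with the key point being that the \emph{same} rank-one direction $e\rho^{\top}$ appears inside the convolution, so the rank-one averaging that defines matrix superharmonicity is preserved by convolution with $\phi$.
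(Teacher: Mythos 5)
Your argument is correct and is essentially the same as the paper's: both represent $m_{\pi}$ as a convolution of $\pi$ with the normal density, swap the sphere average with the convolution integral (the paper cites Fubini, you use Tonelli via nonnegativity, which is the cleaner justification), and then apply the pointwise inequality $L(\pi:X-Y,\rho)\leq\pi(X-Y)$. Your additional verification of lower semicontinuity via Fatou's lemma is a small bonus that the paper's proof leaves implicit.
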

\begin{proof}
Let $\phi(Z)$ be the probability density of $Z \sim {\rm N}_{n,p}(O,I_n,I_p)$.
Then, 
\begin{align*}
	m_{\pi} (Z) &= \int \phi (Z-M) \pi (M) {\rm d} M
	= \int \phi (A) \pi (Z-A) {\rm d} A.
\end{align*}
Thus, for every $X \in \mathbb{R}^{n \times p}$ and $\rho \in \mathbb{R}^p$,
\begin{align*}
    L(m_{\pi}: X,\rho) =& \frac{1}{\Omega_n} \int_{S_{0,1}} \left\{ \int \phi (A) \pi (X+e \rho^{\top}-A) {\rm d} A \right\} {\rm d} s(e) \\
	=& \frac{1}{\Omega_n} \int \phi (A) \left\{ \int_{S_{0,1}} \pi (X+e \rho^{\top}-A) {\rm d} s(e) \right\} {\rm d} A \\
	=& \int \phi (A) L(\pi: X-A, \rho) {\rm d} A \\
	\leq & \int \phi (A) \pi(X-A) {\rm d} A \\
	= & m_{\pi} (X),
\end{align*}
where the second equation follows from Fubini's theorem and the inequality follows from the matrix superharmonicity of $\pi$.
Therefore, $m_{\pi}$ is matrix superharmonic.
\end{proof}

\begin{lemma}\label{lem_sqrt}
	If $f: \mathbb{R}^{n \times p} \to \mathbb{R}$ is matrix superharmonic and $\phi: \mathbb{R} \to \mathbb{R}$ is monotone increasing and concave in the range of $f$, then $\phi \circ {f}$ is also matrix superharmonic.
	In particular, if $f$ is matrix superharmonic and non-negative, then $\sqrt{f}$ is also matrix superharmonic.
\end{lemma}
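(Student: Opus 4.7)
My plan is to verify each of the three defining conditions of matrix superharmonicity for $\phi \circ f$; only the spherical-mean inequality is substantive, while the other two conditions reduce to standard observations about monotone concave functions.

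First, I would dispense with lower semicontinuity and nontriviality. A monotone increasing concave $\phi$ is continuous on the interior of its domain and extends monotonically to $\infty$ via $\phi(\infty) := \sup_t \phi(t)$. For such a $\phi$, each superlevel set $\{\phi > c\}$ in the extended range is of the form $(a_c, \infty]$, so $\{\phi \circ f > c\} = \{f > a_c\}$ is open by the lower semicontinuity of $f$. Nontriviality $\phi \circ f \not\equiv \infty$ is immediate: since $f \not\equiv \infty$, any $X_0$ with $f(X_0) < \infty$ yields $(\phi \circ f)(X_0) = \phi(f(X_0)) \in \mathbb{R}$.

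The heart of the argument is the spherical-mean inequality, which I would obtain in one line by stringing together Jensen's inequality (applied with the normalized surface measure $\Omega_n^{-1}\,{\rm d} s$ on $S_{0,1}$, which is a probability measure), the monotonicity of $\phi$, and the matrix superharmonicity of $f$:
\begin{align*}
L(\phi \circ f : X, \rho)
 &= \frac{1}{\Omega_n} \int_{S_{0,1}} \phi\bigl(f(X+e\rho^{\top})\bigr)\, {\rm d} s(e) \\
 &\le \phi\!\left( \frac{1}{\Omega_n} \int_{S_{0,1}} f(X+e\rho^{\top})\, {\rm d} s(e) \right)
  = \phi\bigl(L(f : X, \rho)\bigr) \\
 &\le \phi\bigl(f(X)\bigr) = (\phi \circ f)(X).
\end{align*}
The first inequality is Jensen's for the concave $\phi$; the second combines $L(f : X, \rho) \le f(X)$ with the monotonicity of $\phi$. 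The special case $\sqrt{f}$ then follows by taking $\phi(t) = \sqrt{t}$, which is monotone increasing and concave on $[0,\infty)$, the range of a nonnegative $f$.

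The only delicate point, if any, is an integrability check: to invoke Jensen's inequality one needs $e \mapsto f(X+e\rho^{\top})$ to be measurable with a well-defined spherical mean. Lower semicontinuity of $f$ gives Borel measurability and boundedness from below on the compact sphere, so $L(f : X, \rho)$ is well-defined in $(-\infty,\infty]$; the possibility $L(f : X, \rho) = \infty$ causes no issue, since $\phi$ extends continuously to $\phi(\infty) \in \mathbb{R} \cup \{\infty\}$ and the chain of inequalities is preserved.
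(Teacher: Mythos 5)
Your argument is correct and coincides with the paper's own proof: both hinge on Jensen's inequality for the concave $\phi$ against the normalized surface measure, followed by the monotonicity of $\phi$ applied to $L(f:X,\rho)\leq f(X)$. The extra remarks on lower semicontinuity, nontriviality, and integrability are fine but not part of the paper's (terser) write-up.
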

\begin{proof}
For every $X \in \mathbb{R}^{n \times p}$ and $\rho \in \mathbb{R}^p$,
\begin{align*}
L(\phi \circ {f}: X, \rho) &= \frac{1}{\Omega_n} \int_{S_{0,1}} \phi(f(X+e \rho^{\top})) {\rm d} s(e) \\
&\leq \phi \left( \frac{1}{\Omega_n} \int_{S_{0,1}} {f}(X+e \rho^{\top}) {\rm d} s(e) \right) \\
&= \phi (L({f}: X, \rho)) \\
&\leq \phi(f(X)),
\end{align*}
where the first inequality follows from Jensen's inequality and the second inequality follows from the matrix superharmonicity of $f$ and the monotonicity of $\phi$.
Therefore, $\phi \circ {f}$ is matrix superharmonic.
\end{proof}

\begin{theorem}\label{th_minimax2}
    Let $\pi(M)$ be a superharmonic prior with $m_{\pi}(X) < \infty$ for every $X$.
    Then, the generalized Bayes estimator $\hat{M}^{\pi}(X)=X+\widetilde{\nabla} \log m_{\pi}(X)$ with respect to $\pi(M)$ is minimax under the matrix quadratic loss.
\end{theorem}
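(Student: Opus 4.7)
The plan is to obtain this as a short chain of the three preceding results. The logical flow I would follow is: matrix superharmonicity of $\pi$ transfers to the marginal $m_\pi$ via Lemma~\ref{lem_m}, then from $m_\pi$ to $\sqrt{m_\pi}$ via Lemma~\ref{lem_sqrt}, and finally matrix superharmonicity of $\sqrt{m_\pi}$ implies minimaxity via Theorem~\ref{th_minimax}. So I would not derive anything new; the task is mostly to check that all hypotheses of these three ingredients are met.

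More concretely, I would first apply Lemma~\ref{lem_m}: because $\pi$ is matrix superharmonic and $m_\pi(X)<\infty$ everywhere by hypothesis, $m_\pi$ is matrix superharmonic. Next, I would note that $m_\pi \ge 0$ (being a marginal density of a non-negative prior against the Gaussian kernel $\phi$), so that $\phi_0(t) := \sqrt{t}$ is monotone increasing and concave on the range of $m_\pi$; Lemma~\ref{lem_sqrt} (invoked in exactly the $\sqrt{\cdot}$ form stated as its second assertion) then gives matrix superharmonicity of $\sqrt{m_\pi}$. Finally, I would invoke Theorem~\ref{th_minimax} to conclude minimaxity of $\hat{M}^\pi$ under the matrix quadratic loss.

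The only genuine loose end to nail down is that Theorem~\ref{th_minimax}, via its appeal to Theorem~\ref{th_c2}, tacitly needs $\sqrt{m_\pi}$ to be $C^2$ so that the matrix Laplacian characterization applies. This is where I would spend the only real sentence of work: $m_\pi = \phi * \pi$ is a convolution of $\pi$ with the strictly positive $C^\infty$ Gaussian density $\phi$, so standard differentiation-under-the-integral arguments show that $m_\pi$ is $C^\infty$. Moreover, since $\phi>0$ everywhere and $\pi$ is a non-trivial non-negative prior, $m_\pi>0$ everywhere, so $\sqrt{m_\pi}$ is $C^2$. (If instead one wishes to avoid that regularity check entirely, one can appeal to the smoothing character of the Gaussian convolution and interpret $\widetilde{\Delta}\sqrt{m_\pi}$ distributionally; this is the direction I would fall back on if any technical issue arose.)

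I do not anticipate a real obstacle: the entire content of the theorem is in the three earlier lemmas/theorems, and the only subtlety is the smoothness remark above. The proof should fit in four or five lines.
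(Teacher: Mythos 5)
Your proposal is correct and follows exactly the paper's own route: Lemma~\ref{lem_m} to transfer matrix superharmonicity from $\pi$ to $m_\pi$, Lemma~\ref{lem_sqrt} to pass to $\sqrt{m_\pi}$, and Theorem~\ref{th_minimax} to conclude minimaxity. Your extra observation that $m_\pi=\phi * \pi$ is smooth and positive, so that $\sqrt{m_\pi}$ is genuinely $C^2$ as Theorem~\ref{th_minimax} tacitly requires, is a point the paper glosses over and is a worthwhile addition rather than a deviation.
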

\begin{proof}
	From Lemma~\ref{lem_m} and Lemma~\ref{lem_sqrt}, $\sqrt{m_{\pi}(X)}$ is matrix superharmonic.
	Therefore, from Theorem~\ref{th_minimax}, $\hat{M}^{\pi}$ is minimax.
\end{proof}

When $p=1$, Theorem~\ref{th_minimax} and Theorem~\ref{th_minimax2} reduce to the classical results by Stein \cite{Stein74} on minimax Bayes estimators of a normal mean vector.

\begin{remark}
	Whereas matrix superharmonic priors provide minimax Bayes estimators of any linear combination $M c$ simultaneously, there may be cases where we are only interested in linear combination with nonnegative coefficients $c \in \mathbb{R}_+^p$.
	In such cases, only the copositivity \cite{Berman} of $-\widetilde{\Delta} m_{\pi}(X)$ suffices.
	Thus, it may be interesting to develop another version of matrix superharmonicity based on copositivity.
\end{remark}

\begin{remark}
Recently, superharmonic priors have been found to give minimax predictive densities under the Kullback--Leibler loss \cite{Komaki06,George06}.
It is an interesting future problem to investigate properties of matrix superharmonic priors in predictive density estimation under some analogue of matrix quadratic loss.
\end{remark}

\subsection{Matrix superharmonic priors}
Here, we provide a class of matrix superharmonic priors, which includes the previously proposed generalization \eqref{SVS} of Stein's prior.

Let
\begin{align}
\pi_{\alpha,\beta} (M)=\det (M^{\top} M + \beta I_p)^{-(\alpha+n+p-1)/2}, \label{mat_t}
\end{align} 
where $-n-p+1 \leq \alpha \leq -2p$ and $\beta \geq 0$.
When $\alpha = -2p$ and $\beta = 0$, $\pi_{\alpha,\beta} (M)$ coincides with the singular value shrinkage prior \eqref{SVS}.
Note that $\pi_{\alpha,\beta} (M)$ with $\alpha>0$ and $\beta>0$ is the so-called matrix t-distribution with $\alpha$ degrees of freedom \cite{Gupta}.
When $p=1$, $\pi_{\alpha,\beta} (M)$ reduces to the $n$-dimensional (improper) multivariate t-prior \cite{Faith}
\begin{align*}
\pi_{\alpha,\beta} (\mu)= (\| \mu \|^2 + \beta)^{-(\alpha+n)/2},
\end{align*} 
and it is superharmonic when $-n \leq \alpha \leq -2$ and $\beta \geq 0$.
This result is extended to general $p$ as follows.

\begin{theorem}\label{th_svs}
	If $-n-p+1 \leq \alpha \leq -2p$ and $\beta \geq 0$, then the prior $\pi_{\alpha,\beta}(M)$ in \eqref{mat_t} is matrix superharmonic.
\end{theorem}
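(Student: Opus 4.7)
The strategy is to invoke Theorem~\ref{th_c2}: for $\beta > 0$ the prior $\pi_{\alpha,\beta}$ is strictly positive and $C^\infty$ on all of $\mathbb{R}^{n \times p}$, so matrix superharmonicity reduces to showing that $\widetilde{\Delta}\pi_{\alpha,\beta}(M) \preceq O$ everywhere. For the boundary case $\beta = 0$, where $\pi_{\alpha,0}$ blows up on the rank-deficient stratum, I would deduce the conclusion by passing to the limit and invoking Lemma~\ref{lem_conv} applied to the increasing sequence $\pi_{\alpha,1/k}$.

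The main calculation is the matrix Laplacian of $\log\det A$, where $A = M^\top M + \beta I_p$. Using Jacobi's formula together with $\partial A / \partial M_{ai} = E_{ai}^\top M + M^\top E_{ai}$ gives $\widetilde{\nabla}\log\det A = 2 M A^{-1}$. Differentiating once more using $\partial A^{-1}/\partial M_{ai} = -A^{-1}(E_{ai}^\top M + M^\top E_{ai}) A^{-1}$ and then summing over $a$, the two crucial simplifications $A^{-1} M^\top M A^{-1} = A^{-1} - \beta A^{-2}$ and $\mathrm{tr}(M A^{-1} M^\top) = p - \beta\,\mathrm{tr}(A^{-1})$ should collapse the expression to
\begin{align*}
\widetilde{\Delta}\log\det A \;=\; 2(n-p-1)\, A^{-1} + 2\beta\bigl(\mathrm{tr}(A^{-1})\, A^{-1} + A^{-2}\bigr).
\end{align*}
Writing $\gamma = (\alpha+n+p-1)/2$ so that $\pi_{\alpha,\beta} = (\det A)^{-\gamma}$ and applying the identity $\widetilde{\Delta} f / f = \widetilde{\Delta}\log f + (\widetilde{\nabla}\log f)^\top \widetilde{\nabla}\log f$ to $\log \pi_{\alpha,\beta} = -\gamma \log\det A$, with $(\widetilde{\nabla}\log \pi_{\alpha,\beta})^\top \widetilde{\nabla}\log\pi_{\alpha,\beta} = 4\gamma^2(A^{-1} - \beta A^{-2})$, this produces an expression of the form
\begin{align*}
\frac{\widetilde{\Delta}\pi_{\alpha,\beta}}{\pi_{\alpha,\beta}} \;=\; 2\gamma\bigl[\,2\gamma - (n-p-1) - \beta\,\mathrm{tr}(A^{-1})\bigr] A^{-1} \;-\; 2\gamma\beta(2\gamma+1)\, A^{-2}.
\end{align*}

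The parameter range $-n-p+1 \leq \alpha \leq -2p$ translates to $0 \leq 2\gamma \leq n-p-1$, which renders both scalar coefficients in the previous display nonpositive while $A^{-1}$ and $A^{-2}$ are positive semidefinite; hence $\widetilde{\Delta}\pi_{\alpha,\beta} \preceq O$ pointwise whenever $\beta > 0$. To cover $\beta = 0$, note that $\pi_{\alpha,\beta}$ is pointwise nondecreasing as $\beta \downarrow 0$ because $\det A$ decreases and $-\gamma \leq 0$, so $\pi_{\alpha,1/k} \nearrow \pi_{\alpha,0}$ pointwise; the limit is finite on the full-rank open set and is therefore not identically $\infty$, so Lemma~\ref{lem_conv} delivers matrix superharmonicity. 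The principal obstacle is the bookkeeping in the second-derivative computation, where several contracted tensor terms appear and must be reorganized via the two identities noted above; once this is done, the final sign check is immediate from the constraints on $\alpha$.
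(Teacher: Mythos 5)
Your proposal is correct and follows essentially the same route as the paper: an explicit matrix-Laplacian computation for $\beta>0$ verified against Theorem~\ref{th_c2}, followed by the monotone limit via Lemma~\ref{lem_conv} for $\beta=0$. Your final expression for $\widetilde{\Delta}\pi_{\alpha,\beta}/\pi_{\alpha,\beta}$ agrees exactly with the paper's \eqref{laplacian} after substituting $\gamma=(\alpha+n+p-1)/2$ (so that $\alpha+2p=2\gamma-(n-p-1)$ and $\alpha+n+p=2\gamma+1$); organizing the calculation through $\widetilde{\Delta}\log\det A$ and the identity $\widetilde{\Delta}f/f=\widetilde{\Delta}\log f+(\widetilde{\nabla}\log f)^{\top}\widetilde{\nabla}\log f$ is only a bookkeeping variant of the paper's direct differentiation of $\det S$.
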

\begin{proof}
In the following, the subscripts $a$, $b$, $\ldots$ run from $1$ to $n$ and the subscripts $i$, $j$, $\ldots$ run from $1$ to $p$.
We denote the $(i,j)$-th entry of $S^{-1}$ by $S^{ij}$ and the Kronecker delta by $\delta_{ij}$.

First, assume that $\beta > 0$.
Let $S = M^{\top} M + \beta I_p \succ O$.
Since $S_{ij} = \sum_a M_{a i} M_{a j} + \beta \delta_{i j}$,
\begin{align}
\frac{\partial S_{kl}}{\partial M_{a i}} = \delta_{ik} M_{al} + \delta_{il} M_{ak}. \label{sigma_m}
\end{align}
Also,
\begin{align*}
\frac{\partial}{\partial S_{ij}} \det S = S^{ij} \det S.
\end{align*}
Thus,
\begin{align}
\frac{\partial}{\partial M_{ai}} \det S & =
\sum_{k,l} \frac{\partial S_{kl}}{\partial M_{ai}} \frac{\partial}{\partial S_{kl}} \det S
= 2 \sum_k M_{ak} S^{ik} \det S. \label{det_1st_diff}
\end{align}
Therefore,
\begin{align}
&\frac{\partial^2}{\partial M_{ai} \partial M_{aj}} \det S \nonumber \\
=\ & 2 \left( S^{ij} - \sum_{k,l} M_{ak} M_{al} S^{ij} S^{kl} + \sum_{k,l} M_{ak} M_{al} S^{ik} S^{jl} \right) \det S, \label{det_2nd_diff}
\end{align}
where we used
\begin{align*}
\frac{\partial S^{ik}}{\partial M_{aj}} = -\sum_{l,m} S^{il} S^{km} \frac{\partial S_{lm}}{\partial M_{aj}} = -\sum_l M_{al} S^{ij} S^{kl} - \sum_l M_{al} S^{il} S^{jk},
\end{align*}
which is obtained by differentiating $\sum_k S_{jk} S^{ik} = \delta_{ij}$ and using \eqref{sigma_m}.

Now, 
\begin{align}
(\widetilde{\Delta} (\det S)^{-(\alpha+n+p-1)/2})_{ij} & = \sum_{a} \frac{\partial^2}{\partial M_{ai} \partial M_{aj}} (\det S)^{-(\alpha+n+p-1)/2} \nonumber \\
& = \frac{\alpha+n+p-1}{2} (\det S)^{-(\alpha+n+p-1)/2} \sum_{a} (A_{aij} + B_{aij}), \label{Delta_K}
\end{align}
where
\begin{align*}
A_{aij} &= \frac{\alpha+n+p+1}{2} (\det S)^{-2} \left( \frac{\partial}{\partial M_{ai}} \det S \right) \left( \frac{\partial}{\partial M_{aj}} \det S \right), \\
B_{aij} &= - (\det S)^{-1} \frac{\partial^2}{\partial M_{ai} \partial M_{aj}} \det S.
\end{align*}
By using $\eqref{det_1st_diff}$ and $\sum_a M_{ak} M_{al} = S_{kl} - \beta \delta_{kl}$,
\begin{align*}
\sum_a A_{aij} &= 2 (\alpha+n+p+1) \sum_a \left( \sum_k M_{ak} S^{ik} \right) \left( \sum_l M_{al} S^{jl} \right)\\
&= 2 (\alpha+n+p+1) \left( S^{ij} - \beta \sum_{k} S^{ik} S^{jk} \right).
\end{align*}
On the other hand, from $\eqref{det_2nd_diff}$,
\begin{align*}
\sum_a B_{aij} = -2 (n-p+1) S^{ij} -2 \beta S^{ij} \sum_k S^{kk} + 2 \beta \sum_{k} S^{ik} S^{jk}.
\end{align*}
Hence, 
\begin{align*}
\sum_a (A_{aij} + B_{aij}) = 2(\alpha+2p) S^{ij} -2(\alpha+n+p) \beta \sum_{k} S^{ik} S^{jk} -2 \beta S^{ij} \sum_k S^{kk}. %
\end{align*}
Substituting this expression into \eqref{Delta_K} gives
\begin{align}
\widetilde{\Delta} (\det S)^{-(\alpha+n+p-1)/2} &= \frac{\alpha+n+p-1}{2} (\det S)^{-(\alpha+n+p-1)/2} \nonumber \\ 
&\times (2(\alpha+2p) S^{-1} -2(\alpha+n+p) \beta (S^{-1})^2 -2 \beta{\rm tr} (S^{-1}) S^{-1}), \label{laplacian}
\end{align}
which is negative semidefinite from $-n-p+1 \leq \alpha \leq -2p$, $\beta > 0$ and $S^{-1} \succ O$.
Therefore, by Theorem~\ref{th_c2}, $\pi_{\alpha,\beta}(M)=(\det S)^{-(\alpha+n+p-1)/2}$ is matrix superharmonic.

Next, assume that $\beta = 0$.
Let 
\begin{align*}
    \pi^{(k)} (M) = \det \left( M^{\top} M + k^{-1} I_p \right)^{-(\alpha+n+p-1)/2}, \quad k=1,2,\dots.
\end{align*}
Then, each $\pi^{(k)}$ is matrix superharmonic from the above discussion.
Also, $\pi^{(1)} \leq \pi^{(2)} \leq \cdots$ and $\lim_{k \rightarrow \infty} \pi^{(k)}(M) = \pi_{\alpha,\beta} (M)$ for every $M$.
Therefore, from Lemma~\ref{lem_conv}, $\pi_{\alpha,\beta}(M)$ is also matrix superharmonic.
\end{proof}

\begin{proposition}\label{prop_marginal}
	If $-n-p+1 \leq \alpha \leq -2p$ and $\beta \geq 0$, then the marginal density $m_{\pi}(X)$ in \eqref{marginal} with $\pi=\pi_{\alpha,\beta}$ in \eqref{mat_t} is finite for every $X$.
\end{proposition}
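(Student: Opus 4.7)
The plan is to split on the sign of $\beta$. When $\beta > 0$, the prior is uniformly bounded: since $\alpha + n + p - 1 \geq 0$ and all eigenvalues of $M^{\top} M + \beta I_p$ are at least $\beta$, we have $\det(M^{\top} M + \beta I_p) \geq \beta^p$, giving $\pi_{\alpha,\beta}(M) \leq \beta^{-p(\alpha+n+p-1)/2}$ uniformly in $M$. Integrating the normal density against this bounded prior yields $m_{\pi}(X) \leq \beta^{-p(\alpha+n+p-1)/2} < \infty$, so this case is immediate.

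The case $\beta = 0$ is more delicate because $\pi_{\alpha,0}(M) = \det(M^{\top} M)^{-(\alpha+n+p-1)/2}$ blows up near rank-deficient $M$. My plan is to pass to the singular value decomposition $M = U \Sigma V^{\top}$ with $U$ on the Stiefel manifold $V_{n,p}$, $V \in O(p)$, and $\Sigma = \mathrm{diag}(\sigma_1, \ldots, \sigma_p)$ with $\sigma_1 \geq \cdots \geq \sigma_p \geq 0$. The standard Jacobian \cite{Gupta} reads
\[
\mathrm{d}M = c_{n,p} \prod_i \sigma_i^{n-p} \prod_{i<j}(\sigma_i^2 - \sigma_j^2) \, \mathrm{d}\sigma_1 \cdots \mathrm{d}\sigma_p \, \mathrm{d}\mu(U) \, \mathrm{d}\mu(V),
\]
where $\mathrm{d}\mu$ denotes the invariant measures on $V_{n,p}$ and $O(p)$. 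Since $\pi_{\alpha,0}(M) = \prod_i \sigma_i^{-(\alpha+n+p-1)}$, the integrand becomes proportional to
\[
p(X \mid M) \prod_i \sigma_i^{-\alpha-2p+1} \prod_{i<j}(\sigma_i^2 - \sigma_j^2),
\]
in which the exponent $-\alpha - 2p + 1 \geq 1$ thanks to $\alpha \leq -2p$, so the Jacobian absorbs the would-be singularity at $\sigma_i = 0$. Using $\|X - M\|_{\mathrm{F}}^2 \geq \tfrac{1}{2}\|M\|_{\mathrm{F}}^2 - \|X\|_{\mathrm{F}}^2$ to pull out an $\exp(-\sum_i \sigma_i^2/4)$ factor provides exponential decay in each $\sigma_i$ at infinity; since $V_{n,p}$ and $O(p)$ are compact, integrating them out contributes only a finite volume factor, and the remaining radial integral reduces to a product of one-dimensional integrals of the form $\int_0^{\infty} \sigma^k e^{-\sigma^2/4}\,\mathrm{d}\sigma$ (together with polynomial factors from the Vandermonde), all manifestly finite.

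The main obstacle I anticipate is not conceptual but bookkeeping — carefully tracking the Jacobian and confirming each exponent has the correct sign so that the singularity of $\pi_{\alpha,0}$ near rank-deficient $M$ is canceled exactly by the $\prod_i \sigma_i^{n-p}$ factor. An alternative route that avoids the SVD calculation would be to dominate $\pi_{\alpha,0}$ via the elementary bound $\sigma^{-\kappa} \leq \sigma^{-(n-p-1)} + 1$ valid for $\kappa \in [0, n-p-1]$ and $\sigma > 0$, and then reduce matters to finiteness of the singular value shrinkage marginal already established in \cite{Matsuda}, together with the trivial case of the flat prior at $\alpha = -n-p+1$.
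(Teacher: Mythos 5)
Your proposal is correct, and for the harder case $\beta=0$ it takes a genuinely different route from the paper. The $\beta>0$ case is handled identically in both: bound the prior by its maximum $\pi_{\alpha,\beta}(O)=\beta^{-p(\alpha+n+p-1)/2}$ and integrate the normal density. For $\beta=0$, the paper observes that $m_{\pi}(X)$ is the expectation of $(\det S)^{-(\alpha+n+p-1)/2}$ where $S=M^{\top}M$ follows a noncentral Wishart distribution $W_p(n,I_p,X^{\top}X)$, and then invokes the moment formula (Theorem 3.5.6 of Gupta and Nagar) to obtain an explicit closed form involving $\Gamma_p(-(\alpha+p-1)/2)$ and the confluent hypergeometric function ${}_1F_1$ of matrix argument, whose finiteness requires only $\alpha<-2p+2$. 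Your approach instead passes to singular-value polar coordinates and dominates the integrand directly: the Jacobian factor $\prod_i\sigma_i^{n-p}$ combines with the prior's $\prod_i\sigma_i^{-(\alpha+n+p-1)}$ to give the exponent $-\alpha-2p+1\geq 1$, killing the singularity at rank-deficient $M$, while the Gaussian kernel (via $\|X-M\|_{\mathrm{F}}^2\geq\tfrac12\|M\|_{\mathrm{F}}^2-\|X\|_{\mathrm{F}}^2$) and the compactness of $V_{n,p}$ and $O(p)$ control the rest; all exponent checks are correct. The trade-off is clear: the paper's argument is shorter and yields an exact expression for $m_{\pi}(X)$ (potentially useful elsewhere), but leans on a nontrivial special-function identity; yours is self-contained and makes transparent exactly where the hypothesis $\alpha\leq-2p$ enters, at the cost of Jacobian bookkeeping. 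Your sketched alternative via the bound $\sigma^{-\kappa}\leq\sigma^{-(n-p-1)}+1$ is less clean than you suggest, since expanding the product $\prod_i(\sigma_i^{-(n-p-1)}+1)$ produces terms indexed by proper subsets of singular values that are not of the form $\det(M^{\top}M)^{-(n-p-1)/2}$, so the reduction to the known finiteness of the $\pi_{\mathrm{SVS}}$ marginal would need additional work; but since this is offered only as an alternative, it does not affect the validity of your main argument.
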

\begin{proof}
We use the fact that $m_{\pi} (X)$ in \eqref{marginal} is interpreted as the expected value of $\pi(M)$ with respect to $M \sim {\rm N}_{n,p} (X, I_n, I_p)$.

When $\beta > 0$, since $\pi_{\alpha,\beta}(M) \leq \pi_{\alpha,\beta}(O)=\beta^{-p(\alpha+n+p-1)/2}$,
\begin{align*}
	m_{\pi}(X) \leq \beta^{-p(\alpha+n+p-1)/2}
\end{align*}
for every $X$.

When $\beta = 0$, 
\begin{align*}
	m_{\pi} (X) = \mathrm{E} \left[ (\det S)^{-(\alpha+n+p-1)/2} \right],
\end{align*}
where $S = M^{\top} M$ has a noncentral Wishart distribution $S \sim W_p (n, I_p, X^{\top} X)$ from Theorem 3.5.1 in \cite{Gupta}.
Therefore, by using Theorem 3.5.6 of \cite{Gupta}, 
\begin{align*}
	m_{\pi} (X) = C {\rm etr} \left( -\frac{1}{2} X^{\top} X \right)
	{}_1 F_1 \left( -\frac{\alpha+p-1}{2}; \frac{n}{2}; \frac{1}{2} X^{\top} X \right),
\end{align*}
where $C={2^{-p(\alpha+n+p-1)/2} \Gamma_p (-(\alpha+p-1)/2)}/{\Gamma_p ( {n/2} )}$, $\Gamma_p$ is the multivariate Gamma function and ${}_1 F_1$ is the hypergeometric function of a matrix argument \cite{Gupta}.
Thus, $m_{\pi} (X)$ is finite for every $X$.
\end{proof}

From Theorem~\ref{th_minimax2}, Theorem~\ref{th_svs} and Proposition~\ref{prop_marginal}, we obtain the following.

\begin{theorem}\label{th_svs_minimax}
	If $-n-p+1 \leq \alpha \leq -2p$ and $\beta \geq 0$, then the generalized Bayes estimator with respect to  the prior $\pi_{\alpha,\beta}(M)$ in \eqref{mat_t} is minimax under the matrix quadratic loss.
\end{theorem}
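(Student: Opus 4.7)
The plan is essentially to assemble the three ingredients already established in this subsection; no new work is required. I would begin by fixing $\alpha,\beta$ in the prescribed range $-n-p+1\leq\alpha\leq -2p$, $\beta\geq 0$, and recalling what needs to be verified: by Theorem~\ref{th_minimax2}, minimaxity of the generalized Bayes estimator follows as soon as (i) the prior $\pi_{\alpha,\beta}$ is matrix superharmonic, and (ii) the marginal $m_{\pi_{\alpha,\beta}}(X)$ is finite for every $X\in\mathbb{R}^{n\times p}$.

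Next, I would invoke Theorem~\ref{th_svs} to discharge condition (i): in the stated parameter range, $\pi_{\alpha,\beta}$ is matrix superharmonic. For condition (ii), I would apply Proposition~\ref{prop_marginal}, which asserts exactly that $m_{\pi_{\alpha,\beta}}(X)<\infty$ for every $X$ in the same parameter range. With both hypotheses of Theorem~\ref{th_minimax2} now verified, the conclusion that $\hat{M}^{\pi}(X)=X+\widetilde{\nabla}\log m_{\pi_{\alpha,\beta}}(X)$ is minimax under the matrix quadratic loss follows immediately.

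There is no real obstacle here — the theorem is a corollary whose substance lives in the three results it cites. All the difficulty has already been absorbed into Theorem~\ref{th_svs} (the delicate matrix-Laplacian computation culminating in \eqref{laplacian}, together with the monotone-limit argument of Lemma~\ref{lem_conv} to pass to $\beta=0$) and into Theorem~\ref{th_minimax2} (the chain passing from matrix superharmonicity of $\pi$, to that of $m_\pi$ via Lemma~\ref{lem_m}, to that of $\sqrt{m_\pi}$ via Lemma~\ref{lem_sqrt}, and finally to the risk bound via Theorem~\ref{th_sure} and Theorem~\ref{th_c2}). The only thing worth flagging in the write-up is that the finiteness hypothesis of Theorem~\ref{th_minimax2} must be checked separately, which is precisely what Proposition~\ref{prop_marginal} provides; beyond this bookkeeping, the proof is a one-line citation.
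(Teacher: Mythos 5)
Your proposal is correct and matches the paper exactly: the paper derives this theorem as an immediate consequence of Theorem~\ref{th_minimax2}, Theorem~\ref{th_svs}, and Proposition~\ref{prop_marginal}, which are precisely the three ingredients you assemble.
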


By taking $\alpha = -2p$ and $\beta = 0$ in Theorem~\ref{th_svs} and Theorem~\ref{th_svs_minimax}, we obtain the following result on the singular value shrinkage prior \eqref{SVS}.

\begin{corollary}\label{cor_svs}
	When $n-p-1>0$, the singular value shrinkage prior $\pi_{\mathrm{SVS}}(M)$ in \eqref{SVS} is matrix superharmonic.
	Also, the generalized Bayes estimator with respect to  $\pi_{\mathrm{SVS}}(M)$ is minimax under the matrix quadratic loss.
\end{corollary}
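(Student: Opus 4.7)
The plan is essentially a direct specialization of the general results just proved for the family $\pi_{\alpha,\beta}$. First I would identify the singular value shrinkage prior $\pi_{\mathrm{SVS}}(M) = \det(M^{\top} M)^{-(n-p-1)/2}$ defined in \eqref{SVS} as the member of the family \eqref{mat_t} obtained by setting $\alpha = -2p$ and $\beta = 0$. Indeed, substituting these values into the exponent $-(\alpha+n+p-1)/2$ gives $-(n-p-1)/2$, and the matrix inside the determinant collapses from $M^{\top} M + \beta I_p$ to $M^{\top} M$, so the two expressions match on the nose.

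Next I would verify that these parameter choices lie in the admissible range required by Theorem~\ref{th_svs} and Proposition~\ref{prop_marginal}, namely $-n-p+1 \leq \alpha \leq -2p$ and $\beta \geq 0$. The upper bound $\alpha = -2p \leq -2p$ is trivial, while the lower bound $-n-p+1 \leq -2p$ is equivalent to $p \leq n-1$, which is exactly the standing hypothesis $n-p-1 > 0$. The condition $\beta \geq 0$ holds with equality.

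With the parameter check in hand, Theorem~\ref{th_svs} applied at $(\alpha,\beta)=(-2p,0)$ yields matrix superharmonicity of $\pi_{\mathrm{SVS}}$, and Theorem~\ref{th_svs_minimax} at the same parameter values yields minimaxity of the associated generalized Bayes estimator under the matrix quadratic loss (with finiteness of the marginal supplied by Proposition~\ref{prop_marginal}). There is no real obstacle here, since the hard work — differentiating $\det(M^{\top} M + \beta I_p)^{-(\alpha+n+p-1)/2}$ and checking that the resulting matrix Laplacian \eqref{laplacian} is negative semidefinite in the stated range, together with the approximation argument via Lemma~\ref{lem_conv} to pass from $\beta>0$ to $\beta=0$ — has already been carried out in the proof of Theorem~\ref{th_svs}. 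The corollary is therefore a one-line specialization once the parameter correspondence $\alpha=-2p$, $\beta=0$ is noted.
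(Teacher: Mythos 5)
Your proposal is correct and matches the paper's own argument, which likewise obtains the corollary by specializing Theorems~\ref{th_svs} and \ref{th_svs_minimax} (with Proposition~\ref{prop_marginal}) to $\alpha=-2p$, $\beta=0$. One tiny quibble: the lower bound $-n-p+1\leq -2p$ is equivalent to $n-p-1\geq 0$, so the hypothesis $n-p-1>0$ is (slightly more than) sufficient rather than exactly equivalent, but this does not affect the validity of the argument.
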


In particular, the matrix superharmonicity of $\pi_{{\rm SVS}}$ is strongly concentrated on the space of low rank matrices, which has measure zero, in the same way as the Laplacian of Stein's prior $\pi(\mu)=\| \mu \|^{2-n}$ becomes a Dirac delta function at the origin.

\begin{corollary}\label{cor_svs2}
    If $M$ has full-rank, then $\widetilde{\Delta} \pi_{\mathrm{SVS}}(M) = O$.
\end{corollary}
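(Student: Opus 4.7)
The plan is to derive this corollary as an immediate consequence of the explicit matrix Laplacian formula \eqref{laplacian} established during the proof of Theorem~\ref{th_svs}. Recall that $\pi_{\mathrm{SVS}}(M)$ corresponds to the choice $\alpha = -2p$ and $\beta = 0$ in the family $\pi_{\alpha,\beta}(M) = (\det S)^{-(\alpha+n+p-1)/2}$, where $S = M^\top M + \beta I_p$. The computation leading to \eqref{laplacian} was carried out under the standing assumption $\beta > 0$ (so that $S \succ O$ and $S^{-1}$ makes sense); the key point is that the same pointwise computation is valid whenever $S = M^\top M$ is invertible, which is precisely the full-rank assumption on $M$.

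Given this, the main step is simply to substitute $\alpha = -2p$ and $\beta = 0$ into the bracketed expression in \eqref{laplacian}. The first term becomes $2(\alpha+2p)S^{-1} = 0$, and the remaining two terms vanish because they carry a factor of $\beta$. Thus the entire matrix Laplacian is zero on the open set of full-rank $M$.

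There is essentially no obstacle here beyond making sure the derivation of \eqref{laplacian} goes through without change when $\beta = 0$ and $M$ has full rank: the formulas \eqref{sigma_m}, \eqref{det_1st_diff}, \eqref{det_2nd_diff} are polynomial identities in $M$ that only require $S$ to be invertible, and the identity $\sum_a M_{ak} M_{al} = S_{kl} - \beta \delta_{kl} = S_{kl}$ still holds. I would therefore write the proof as a single short paragraph: state that under the full-rank assumption $S = M^\top M$ is positive definite, invoke formula \eqref{laplacian} from the proof of Theorem~\ref{th_svs} with $\alpha = -2p$ and $\beta = 0$, and observe that every term in the bracket vanishes, yielding $\widetilde{\Delta}\pi_{\mathrm{SVS}}(M) = O$.
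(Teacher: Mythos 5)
Your proposal is correct and follows the same route as the paper: substitute $\alpha=-2p$ and $\beta=0$ into \eqref{laplacian} and observe that every term in the bracket vanishes, with the full-rank hypothesis guaranteeing $\det S>0$ so the formula applies. Your added remark that the derivation of \eqref{laplacian} only needs $S$ invertible (not $\beta>0$) is a careful justification the paper leaves implicit, but the argument is the same.
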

\begin{proof}
	Substituting $\alpha=-2p$ and $\beta=0$ into \eqref{laplacian} gives
\begin{align*}
	\widetilde{\Delta} (\det S)^{-(n-p-1)/2} = - (n-p-1) (\det S)^{-(n-p-1)/2}  \{ (n-p) (S^{-1})^2 + {\rm tr} (S^{-1}) S^{-1} \} \beta = O,
\end{align*}
where we used $\det S > 0$ since $M$ has full-rank.
\end{proof}

\begin{remark}
The Efron--Morris estimator \eqref{EM_estimator} can be viewed as a pseudo-Bayes estimator $\hat{M}=X+\widetilde{\nabla} \log m(X)$ \cite{shr_book} with the pseudo-marginal $m(X)=\pi_{\mathrm{SVS}}(X)=\det (X^{\top} X)^{-(n-p-1)/2}$.
Combining such a pseudo-Bayes interpretation with Theorem~\ref{th_minimax} and Theorem~\ref{th_svs} with $\beta=0$, it follows that the estimator $\hat{M}=X(I_p-c(X^{\top}X)^{-1})$ is minimax for $0 \leq c \leq 2(n-p-1)$.
\end{remark}

\subsection{Stein-type priors}
We further investigate matrix superharmonicity of another types of shrinkage priors.

Since Stein's prior $\pi(\mu)=\| \mu \|^{2-n}$ for $\mu \in \mathbb{R}^n$ is superharmonic \cite{Stein74}, the prior 
\begin{align}\label{Stein_prior}
    \pi_{\mathrm{S}}(M)=\| M \|_{\mathrm{F}}^{2-np}=\| {\rm vec}(M) \|^{2-np}
\end{align}
is also superharmonic.
More generally, the shrinkage prior $\pi(M)=\| M \|^{-c}$ with $c \geq 0$ is superharmonic if and only if $0 \leq c \leq np-2$ \cite{shr_book}.
However, the range of $c$ for matrix superharmoncity is narrower.
In particular, Stein's prior $\pi_{\mathrm{S}}(M)$ in \eqref{Stein_prior} is not matrix superharmonic.

\begin{proposition}\label{prop_stein}
The prior $\pi(M)=\| M \|_{\mathrm{F}}^{-c}$ with $c \geq 0$ is matrix superharmonic if and only if $0 \leq c \leq n-2$.
\end{proposition}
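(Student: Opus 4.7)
The plan is to apply Theorem~\ref{th_c2} to the smoothed family
\[
\pi^{(k)}(M) = \bigl(\|M\|_{\mathrm{F}}^2 + 1/k\bigr)^{-c/2}, \qquad k = 1, 2, \dots,
\]
which is $C^2$ on all of $\mathbb{R}^{n \times p}$ and avoids the singularity of $\pi$ at $O$, and then pass to the limit via Lemma~\ref{lem_conv}, just as in the $\beta = 0$ case of Theorem~\ref{th_svs}. A direct differentiation using $\partial\|M\|_{\mathrm{F}}^2/\partial M_{ai} = 2M_{ai}$ gives, with $s = \|M\|_{\mathrm{F}}^2 + 1/k$,
\[
\widetilde{\Delta}\pi^{(k)}(M) = c\,s^{-c/2-2}\bigl[(c+2)\,M^{\top}M - n\,s\,I_p\bigr].
\]

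For the sufficient direction ($0 \leq c \leq n-2$), the largest eigenvalue of $M^{\top}M$ is $\sigma_1(M)^2 \leq \|M\|_{\mathrm{F}}^2 \leq s$, so $M^{\top}M \preceq s\,I_p$ and hence $(c+2)M^{\top}M \preceq (c+2)\,s\,I_p \preceq n\,s\,I_p$. Thus $\widetilde{\Delta}\pi^{(k)}(M) \preceq O$ everywhere, and Theorem~\ref{th_c2} makes each $\pi^{(k)}$ matrix superharmonic. Since $-c/2 \leq 0$, the sequence $(\pi^{(k)})$ is increasing pointwise with limit $\pi$, so Lemma~\ref{lem_conv} concludes.

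For the necessary direction, suppose $c > n-2$. Choose any nonzero rank-one matrix $M_0 = u v^{\top}$; then $\pi$ is $C^2$ in a neighborhood of $M_0$, $M_0^{\top}M_0 = \|u\|^2 v v^{\top}$, and $\|M_0\|_{\mathrm{F}}^2 = \|u\|^2\|v\|^2$, so setting $\rho = v/\|v\|$ in the ($k \to \infty$ limit of the) Laplacian formula yields
\[
\rho^{\top}\widetilde{\Delta}\pi(M_0)\,\rho = c\,\|M_0\|_{\mathrm{F}}^{-c-2}\,(c + 2 - n) > 0.
\]
The Taylor expansion used in the proof of Theorem~\ref{th_c2} then gives $L(\pi:M_0,\varepsilon\rho) > \pi(M_0)$ for all sufficiently small $\varepsilon > 0$, contradicting matrix superharmonicity.

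The decisive point — and the reason the threshold $n-2$ is strictly smaller than the $np-2$ that governs ordinary superharmonicity of $\|M\|_{\mathrm{F}}^{-c}$ — is the sharpness of the spectral inequality $\sigma_1(M)^2 \leq \|M\|_{\mathrm{F}}^2$, which is attained exactly at rank-one matrices. Since matrix superharmonicity only averages over rank-one perturbations $e\rho^{\top}$, the relevant constraint is dictated by these extremal matrices rather than by the full Frobenius norm, so the main obstacle is simply to recognize which class of test matrices produces the tight bound.
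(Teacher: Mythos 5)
Your proof is correct and follows essentially the same route as the paper: the sufficiency direction (smoothing with $\beta=1/k$, the spectral bound $M^{\top}M \preceq \|M\|_{\mathrm{F}}^2 I_p$, Theorem~\ref{th_c2}, then Lemma~\ref{lem_conv}) is identical, and the necessity direction rests on the same key choice of a rank-one test matrix with $\rho$ along its row space. The only difference is cosmetic: where you detect the violation infinitesimally via the Taylor expansion $L(\pi:M_0,\varepsilon\rho)=\pi(M_0)+\frac{\varepsilon^2}{2n}\rho^{\top}\widetilde{\Delta}\pi(M_0)\rho+o(\varepsilon^2)$ at a general rank-one point, the paper exhibits the violation on a unit-radius sphere at the specific point $M=\mathbf{1}e_1^{\top}$ by applying Green's theorem to the strictly subharmonic function $e\mapsto\|\mathbf{1}+e\|^{-c}$; both are valid.
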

\begin{proof}
First, assume that $0 \leq c \leq n-2$.
Let $\pi_{\beta}(M)=(\| M \|_{\mathrm{F}}^2+\beta)^{-c/2}$ with $\beta>0$.
Then, $\pi_{\beta}(M)$ is $C^2$ and its matrix Laplacian is
\begin{align*}
    \widetilde{\Delta} \pi_{\beta}(M) = -c (\| M \|_{\mathrm{F}}^2+\beta)^{-c/2-2} (n (\| M \|_{\mathrm{F}}^2+\beta)I_p - (c+2) M^{\top} M) \preceq O,
\end{align*}
where we used $n (\| M \|_{\mathrm{F}}^2+\beta)I_p - (c+2) M^{\top} M \succeq O$ from $n \geq c+2$ and $\| M \|_{\mathrm{F}}^2 I_p \succeq M^{\top} M$.
Therefore, from Theorem~\ref{th_c2}, $\pi_{\beta}(M)$ is matrix superharmonic for every $\beta>0$.
Then, $\pi^{(k)}(M)=(\| M \|_{\mathrm{F}}^2+k^{-1})^{-c/2}$ is an increasing sequence of matrix superharmonic functions with $\lim_{k \to \infty} \pi^{(k)}(M)=\pi(M)$.
Thus, from Lemma~\ref{lem_conv}, $\pi(M)$ is also matrix superharmonic.

Next, assume that $c > n-2$.
Consider $M$ and $\rho$ defined by
\begin{align*}
    M_{ai} = \begin{cases} 1 & (i=1) \\ 0 & (2 \leq i \leq p) \end{cases}, \quad \rho_i = \begin{cases} 1 & (i=1) \\ 0 & (2 \leq i \leq p) \end{cases}.
\end{align*}
Then,
\begin{align*}
L(\pi: M, \rho)= \frac{1}{\Omega_n} \int_{S_{0,1}} \pi(M+e \rho^{\top}) {\rm d} s(e) = \frac{1}{\Omega_n} \int_{S_{0,1}} g(e) {\rm d} s(e),
\end{align*}
where the function $g: \mathbb{R}^n \to \mathbb{R}$ is given by $g(e)=\| \mathbf{1}+e \|^{-c}$ with the all-one vector $\mathbf{1}=(1,\dots,1)^{\top} \in \mathbb{R}^n$.
Since $c > n-2$, $\Delta g(e) = c(c-n+2) \| \mathbf{1}+e \|^{-c-2} > 0$.
Thus, by Green's theorem, 
\begin{align*}
\frac{1}{\Omega_n} \int_{S_{0,1}} g(e) {\rm d} s(e) > g(0) = \pi(M).
\end{align*}
Hence, we have $L(\pi: M, \rho) > \pi(M)$.
Therefore, $\pi(M)$ is not matrix superharmonic.
\end{proof}

\begin{corollary}\label{cor_stein}
	When $p \geq 2$, Stein's prior $\pi_{\mathrm{S}}(M)=\| M \|_{\mathrm{F}}^{2-np}$ is not matrix superharmonic.
\end{corollary}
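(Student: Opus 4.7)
The plan is to invoke Proposition~\ref{prop_stein} directly, since Stein's prior $\pi_{\mathrm{S}}(M) = \|M\|_{\mathrm{F}}^{2-np}$ is precisely $\|M\|_{\mathrm{F}}^{-c}$ with $c = np - 2$, so the corollary is just a matter of checking where this exponent lies relative to the threshold $n-2$ from that proposition.

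First I would note that $c = np-2 \geq 0$ holds automatically when $p \geq 2$ and $n \geq 1$, so we are in the parameter range covered by Proposition~\ref{prop_stein}. Next I would check the upper bound $c \leq n - 2$: rearranging $np - 2 \leq n - 2$ gives $n(p-1) \leq 0$, which fails for every $n \geq 1$ as soon as $p \geq 2$. Hence $c = np - 2 > n - 2$, and the ``only if'' direction of Proposition~\ref{prop_stein} (the side built on the explicit counterexample with $M$ and $\rho$ concentrated on their first coordinate, via the computation $\Delta\|\mathbf{1}+e\|^{-c} = c(c-n+2)\|\mathbf{1}+e\|^{-c-2} > 0$ and Green's theorem) applies and tells us that $\pi_{\mathrm{S}}$ is not matrix superharmonic.

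There is no genuine obstacle: the corollary is pure bookkeeping on the exponent once Proposition~\ref{prop_stein} is in hand. The only thing worth emphasising is the contrast with ordinary superharmonicity, where $\|M\|_{\mathrm{F}}^{-c}$ remains superharmonic all the way up to $c = np - 2$, so that $\pi_{\mathrm{S}}$ is superharmonic on $\mathbb{R}^{np}$ via $\mathrm{vec}^{-1}$ but, as soon as $p \geq 2$, ceases to be matrix superharmonic. This provides the promised concrete counterexample to the converse of Proposition~\ref{prop_super} mentioned just after its proof.
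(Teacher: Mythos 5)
Your proposal is correct and matches the paper's approach: the corollary is stated immediately after Proposition~\ref{prop_stein} with no separate proof, precisely because it is the instantiation $c = np-2$, and your check that $np-2 > n-2$ whenever $p \geq 2$ is exactly the intended bookkeeping. Your closing remark about the contrast with ordinary superharmonicity (and the counterexample to the converse of Proposition~\ref{prop_super}) also accords with the paper's surrounding discussion.
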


\cite{abu} showed that the column-wise shrinkage estimator of James--Stein type
\begin{align*}
\hat{M} = X D, \quad D = {\rm diag}(d_1,\dots,d_p), \quad d_i = 1-\frac{c}{\sum_a X_{ai}^2}
\end{align*}
is minimax under the matrix quadratic loss when $0 \leq c \leq 2(n-2)/p$.
Note that this estimator can be viewed as a pseudo-Bayes estimator $\hat{M}=X+\widetilde{\nabla} \log m(X)$ \cite{shr_book} with the pseudo-marginal $m(X)=\prod_i (\sum_a X_{ai}^2)^{-c/2}$.
Their result is understood from the viewpoint of matrix superharmonicity as follows.

\begin{proposition}\label{prop_abu}
	The prior $\pi(M)=\prod_i (\sum_a M_{ai}^2)^{-c/2}$ with $c \geq 0$ is matrix superharmonic if and only if $0 \leq c \leq (n-2)/p$.
\end{proposition}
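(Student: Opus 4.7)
The argument mirrors the two-step template used for Proposition~\ref{prop_stein}: regularize $\pi$ by a smooth family $\pi_\beta$, verify matrix superharmonicity of $\pi_\beta$ via Theorem~\ref{th_c2}, then pass to the limit with Lemma~\ref{lem_conv}; for the converse direction, exhibit specific $M$ and $\rho$ that violate the spherical-mean inequality.

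For sufficiency, I set $\pi_\beta(M) = \prod_{i=1}^p S_i^{-c/2}$ with $S_i = \|m_i\|^2+\beta$ and $m_i$ the $i$-th column of $M$. Since $\log\pi_\beta$ decouples across columns, $\widetilde{\Delta}\log\pi_\beta$ is diagonal, and the identity $\widetilde{\Delta}\pi_\beta/\pi_\beta = \widetilde{\Delta}\log\pi_\beta + (\widetilde{\nabla}\log\pi_\beta)^\top \widetilde{\nabla}\log\pi_\beta$ produces, for any $v\in\mathbb{R}^p$,
\[
v^\top(\widetilde{\Delta}\pi_\beta/\pi_\beta)v \;=\; c\sum_i v_i^2\,\frac{(2-n)\|m_i\|^2 - n\beta}{S_i^2} \;+\; c^2\Big\|\sum_i \frac{v_i m_i}{S_i}\Big\|^2.
\]
The positive term is controlled by the triangle inequality followed by Cauchy--Schwarz together with $\sum_i \|m_i\|^2/S_i \leq p$, giving $c^2\|\sum_i v_i m_i/S_i\|^2 \leq c^2 p \sum_i v_i^2/S_i$. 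Collecting coefficients of $\|m_i\|^2$ and $\beta$ then yields
\[
v^\top(\widetilde{\Delta}\pi_\beta/\pi_\beta)v \;\leq\; c\sum_i v_i^2\,\frac{[cp-(n-2)]\|m_i\|^2 + [cp-n]\beta}{S_i^2},
\]
which is $\leq 0$ precisely when $cp\leq n-2$. Hence $\pi_\beta$ is matrix superharmonic by Theorem~\ref{th_c2}, and Lemma~\ref{lem_conv} applied to the increasing sequence $\pi_{1/k}\uparrow \pi$ transfers matrix superharmonicity to $\pi$.

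For necessity, suppose $c > (n-2)/p$. Take $M$ with every column equal to $\mathbf{1}\in\mathbb{R}^n$ and $\rho=\mathbf{1}\in\mathbb{R}^p$, so that every column of $M+e\rho^\top$ equals $\mathbf{1}+e$ and $\pi(M+e\rho^\top) = g(e) := \|\mathbf{1}+e\|^{-cp}$. A direct computation gives $\Delta g(e) = cp(cp-n+2)\|\mathbf{1}+e\|^{-cp-2}>0$ on the closed unit ball about the origin, whose only pole $e=-\mathbf{1}$ lies outside since $\sqrt n>1$. The Green's-theorem argument from the converse part of Proposition~\ref{prop_stein} then gives $L(\pi:M,\rho) = \Omega_n^{-1}\int_{S_{0,1}} g(e)\,\mathrm{d}s(e) > g(0) = \pi(M)$, so $\pi$ is not matrix superharmonic.

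The decisive step is the Cauchy--Schwarz bound responsible for the factor $p$: treating the columns independently would only recover the looser condition $c\leq n-2$ (coming from the columnwise superharmonicity of $\|m_i\|^{-c}$), whereas capturing the cross-column coupling through $\|\sum_i v_i m_i/S_i\|^2$ is what forces the sharp threshold $(n-2)/p$. Once this bound is in hand, the sign analysis and the regularization/limit step are structurally identical to those in the proof of Proposition~\ref{prop_stein}.
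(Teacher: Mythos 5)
Your proof is correct and follows essentially the same route as the paper's: regularize with $\pi_\beta$, verify $\widetilde{\Delta}\pi_\beta \preceq O$ via Theorem~\ref{th_c2}, pass to the limit with Lemma~\ref{lem_conv}, and for necessity use the all-ones $M$ and $\rho$ with the Green's-theorem argument. The only (cosmetic) difference is that you obtain the factor $p$ in the cross-term bound via the triangle inequality and Cauchy--Schwarz applied to $\bigl\|\sum_i v_i m_i/S_i\bigr\|^2$, whereas the paper packages the same estimate as the matrix inequality $A \preceq pB$ deduced from the fact that $B^{-1/2}AB^{-1/2}$ is positive semidefinite with unit diagonal.
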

\begin{proof}
First, assume that $0 \leq c \leq (n-2)/p$.
Let $\pi_{\beta}(M)=\prod_i (\sum_a M_{ai}^2 + \beta)^{-c/2}$ with $\beta>0$.
Then, $\pi_{\beta}(M)$ is $C^2$ and its matrix Laplacian is
\begin{align*}
    \widetilde{\Delta} \pi_{\beta}(M) = c \pi_{\beta}(M) (c A - (n-2) B - n \beta C),
\end{align*}
where $A,B,C$ are $p \times p$ positive semidefinite matrices with entries
\begin{align*}
    A_{ij}=\left( \sum_a M_{ai}^2 + \beta \right)^{-1} \left( \sum_a M_{aj}^2 + \beta \right)^{-1} \left( \sum_a M_{ai} M_{aj} \right), 
\end{align*}
\begin{align*}
    B_{ij}=\delta_{ij} A_{ij}, \quad C_{ij}=\delta_{ij} \left( \sum_a M_{ai}^2 + \beta \right)^{-2}. 
\end{align*}
Let $S=B^{-1/2}AB^{-1/2} \succeq O$. 
Then, all diagonal entries of $S$ are one and thus $S \preceq ({\rm tr} S) I_p = p I_p$.
Thus, $A = B^{1/2} S B^{1/2} \preceq p B$.
Therefore, from $c \leq (n-2)/p$,
\begin{align*}
    \widetilde{\Delta} \pi_{\beta}(M) \preceq c (cp-n+2) \pi_{\beta}(M) B \preceq O.
\end{align*}
Hence, from Theorem~\ref{th_c2}, $\pi_{\beta}(M)$ is matrix superharmonic for every $\beta>0$.
Then, $\pi^{(k)}(M)=\prod_i (\sum_a M_{ai}^2 + k^{-1})^{-c/2}$ is an increasing sequence of matrix superharmonic functions with $\lim_{k \to \infty} \pi^{(k)}(M)=\pi(M)$.
Thus, from Lemma~\ref{lem_conv}, $\pi(M)$ is also matrix superharmonic.

Next, assume that $c > (n-2)/p$.
Suppose that all entries of $M$ and $\rho$ are one.
Then,
\begin{align*}
L(\pi: M, \rho)= \frac{1}{\Omega_n} \int_{S_{0,1}} \pi(M+e \rho^{\top}) {\rm d} s(e) = \frac{1}{\Omega_n} \int_{S_{0,1}} g(e) {\rm d} s(e),
\end{align*}
where the function $g: \mathbb{R}^n \to \mathbb{R}$ is given by $g(e)=\| \mathbf{1}+e \|^{-cp}$ with the all-one vector $\mathbf{1}=(1,\dots,1)^{\top} \in \mathbb{R}^n$.
Since $c > (n-2)/p$, $\Delta g(e) = cp(cp-n+2) \| \mathbf{1}+e \|^{-cp-2} > 0$.
Thus, by Green's theorem, 
\begin{align*}
\frac{1}{\Omega_n} \int_{S_{0,1}} g(e) {\rm d} s(e) > g(0) = \pi(M).
\end{align*}
Hence, we have $L(\pi: M, \rho) > \pi(M)$.
Therefore, $\pi(M)$ is not matrix superharmonic.
\end{proof}

\section{Numerical results}
In this section, we present several numerical results on the matrix quadratic risk of shrinkage estimators. %
We denote the $i$-th singular value of $M$ by $\sigma_i$.
Note that singular values are in descending order: $\sigma_1 \geq \sigma_2 \geq \cdots \geq \sigma_p$.

In the following, we focus on the eigenvalues $\lambda_1 \geq \cdots \geq \lambda_p$ of the matrix quadratic risk $R(M,\hat{M})$ of several estimators.
Since $R(M,\hat{M})=n I_p$ for the maximum likelihood estimator $\hat{M}=X$, an estimator is minimax if and only if $\lambda_1 \leq n$ for every $M$.

First, we compare the generalized Bayes estimators with respect to the singular value shrinkage prior $\pi_{\mathrm{SVS}}(M)$ in \eqref{SVS} and Stein's prior $\pi_{\mathrm{S}}(M)$ in \eqref{Stein_prior} for $n=5$ and $p=3$. 
We employed the numerical method of \cite{Matsuda} to compute the generalized Bayes estimators. 

Figure~\ref{fig1} plots the three eigenvalues $\lambda_1 \geq \lambda_2 \geq \lambda_3$ of the matrix quadratic risk with respect to $\sigma_2$ when $\sigma_1=10$ and $\sigma_3=0$.
For $\pi_{{\rm SVS}}$, all eigenvalues do not exceed $n=5$, which indicates the minimaxity.
More specifically, both $\lambda_1$ and $\lambda_3$ are almost constant with values $\lambda_1 \approx 5$ and $\lambda_3 \approx 4$ respectively, whereas $\lambda_2$ increases from 4 to 5 with $\sigma_2$.
These behaviors are understood from the fact that $\pi_{{\rm SVS}}$ shrinks each singular value separately \cite{Matsuda}.
For $\pi_{{\rm S}}$, $\lambda_1$ is larger than $n=5$ when $\sigma_2 \leq 8$ and thus the estimator is not minimax.
This is compatible with Proposition~\ref{prop_stein}.
However, the sum $\lambda_1+\lambda_2+\lambda_3$ of eigenvalues, which is equal to the Frobenius risk ${\rm E}_M [\| \hat{M}-M \|_{\mathrm{F}}^2 ] = {\rm tr} \ R(M,\hat{M})$, does not exceed $np=15$, because $\pi_{{\rm S}}$ is superharmonic in usual sense.
This is similar to the fact that the James--Stein estimator is not minimax componentwise, even though it is minimax under the quadratic loss for the whole vector \cite{Lehmann}.

\begin{figure}[htbp]
\centering
\begin{tikzpicture}
\tikzstyle{every node}=[]
\begin{axis}[width=7cm,
xmax=10,xmin=0,
ymax=6, ymin=0,
xlabel={$\sigma_2$},ylabel={eigenvalue},
ylabel near ticks,
	]
\addplot[thin, dashed, color=black,
filter discard warning=false, unbounded coords=discard
] table {
         0    5
   10.0000    5
};
\addplot[very thick, color=black,filter discard warning=false, unbounded coords=discard] table {
         0    4.0051
1.0000    4.0166
2.0000    4.0296
3.0000    4.0079
4.0000    4.0103
5.0000    4.0076
6.0000    4.0276
7.0000    4.0121
8.0000    4.0221
9.0000    4.0065
10.0000    4.0128

};
\addplot[very thick, color=black,filter discard warning=false, unbounded coords=discard] table {
         0    4.0186
1.0000    4.1646
2.0000    4.4293
3.0000    4.6606
4.0000    4.8145
5.0000    4.9023
6.0000    4.9356
7.0000    4.9486
8.0000    4.9648
9.0000    4.9694
10.0000    4.9675

};
\addplot[very thick, color=black,filter discard warning=false, unbounded coords=discard] table {
         0    4.9959
1.0000    4.9831
2.0000    4.9923
3.0000    4.9951
4.0000    4.9881
5.0000    4.9810
6.0000    4.9725
7.0000    4.9676
8.0000    4.9876
9.0000    4.9892
10.0000    4.9933

};
\end{axis}
\end{tikzpicture} 
\begin{tikzpicture}
\tikzstyle{every node}=[]
\begin{axis}[width=7cm,
xmax=10,xmin=0,
ymax=6, ymin = 0,
xlabel={$\sigma_2$},ylabel={eigenvalue},
ylabel near ticks,
	]
\addplot[thin, dashed, color=black,
filter discard warning=false, unbounded coords=discard
] table {
         0    5
   10.0000    5
};
\addplot[very thick, color=black,filter discard warning=false, unbounded coords=discard] table {
         0    3.9068
1.0000    3.9242
2.0000    3.9601
3.0000    3.9849
4.0000    4.0416
5.0000    4.0990
6.0000    4.1813
7.0000    4.2299
8.0000    4.3008
9.0000    4.3470
10.0000    4.4101

};
\addplot[very thick, color=black,filter discard warning=false, unbounded coords=discard] table {
         0    3.9164
1.0000    3.9654
2.0000    4.0353
3.0000    4.1292
4.0000    4.2611
5.0000    4.4079
6.0000    4.5314
7.0000    4.6405
8.0000    4.7437
9.0000    4.8325
10.0000    4.8800

};
\addplot[very thick, color=black,filter discard warning=false, unbounded coords=discard] table {
         0    5.6653
1.0000    5.6211
2.0000    5.5731
3.0000    5.4827
4.0000    5.3658
5.0000    5.2617
6.0000    5.1572
7.0000    5.0569
8.0000    5.0073
9.0000    4.9296
10.0000    4.9035

};
\end{axis}
\end{tikzpicture} 

	\caption{Eigenvalues of matrix quadratic risk of generalized Bayes estimators ($n=5$, $p=3$, $\sigma_1=10$, $\sigma_3=0$). left: $\pi_{\mathrm{SVS}}$. right: $\pi_{\mathrm{S}}$. The dashed line shows $n=5$.}
	\label{fig1}
\end{figure}
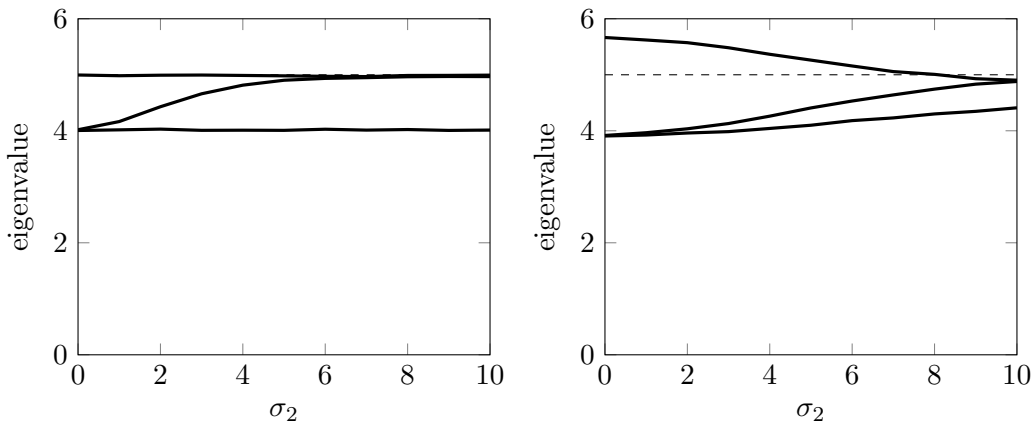

Figure~\ref{fig2} plots the three eigenvalues $\lambda_1 \geq \lambda_2 \geq \lambda_3$ of the matrix quadratic risk with respect to $\sigma_1$ when $\sigma_2=\sigma_3=0$.
Thus, the rank of $M$ is one.
For $\pi_{{\rm SVS}}$, both $\lambda_2$ and $\lambda_3$ are almost constant around 4, whereas $\lambda_1$ increases from 4 to 5 with $\sigma_1$.
It indicates that $\pi_{{\rm SVS}}$ works particularly well when $M$ has low rank. 
For $\pi_{{\rm S}}$, all eigenvalues are fairly small when $\sigma_1=0$, namely $M=O$.
However, $\lambda_1$ increases rapidly with $\sigma_1$ and becomes larger than $n=5$ when $\sigma_1 \geq 4$.

\begin{figure}[htbp]
\centering
\begin{tikzpicture}
\tikzstyle{every node}=[]
\begin{axis}[width=7cm,
xmax=10,xmin=0,
ymax=6, ymin=0,
xlabel={$\sigma_1$},ylabel={eigenvalue},
ylabel near ticks,
	]
\addplot[thin, dashed, color=black,
filter discard warning=false, unbounded coords=discard
] table {
         0    5
   10.0000    5
};
\addplot[very thick, color=black,filter discard warning=false, unbounded coords=discard] table {
         0    4.0069
1.0000    4.0192
2.0000    4.0200
3.0000    4.0088
4.0000    4.0170
5.0000    4.0165
6.0000    4.0234
7.0000    4.0166
8.0000    4.0207
9.0000    4.0164
10.0000    4.0114

};
\addplot[very thick, color=black,filter discard warning=false, unbounded coords=discard] table {
         0    4.0243
1.0000    4.0524
2.0000    4.0585
3.0000    4.0311
4.0000    4.0292
5.0000    4.0345
6.0000    4.0417
7.0000    4.0210
8.0000    4.0295
9.0000    4.0285
10.0000    4.0251

};
\addplot[very thick, color=black,filter discard warning=false, unbounded coords=discard] table {
         0    4.0562
1.0000    4.1362
2.0000    4.3664
3.0000    4.5956
4.0000    4.7446
5.0000    4.8320
6.0000    4.8809
7.0000    4.9092
8.0000    4.9501
9.0000    4.9552
10.0000    4.9728

};
\end{axis}
\end{tikzpicture} 
\begin{tikzpicture}
\tikzstyle{every node}=[]
\begin{axis}[width=7cm,
xmax=10,xmin=0,
ymax=6, ymin = 0,
xlabel={$\sigma_1$},ylabel={eigenvalue},
ylabel near ticks,
	]
\addplot[thin, dashed, color=black,
filter discard warning=false, unbounded coords=discard
] table {
         0    5
   10.0000    5
};
\addplot[thin, dashed, color=black,
filter discard warning=false, unbounded coords=discard
] table {
         0    5
   10.0000    5
};
\addplot[very thick, color=black,filter discard warning=false, unbounded coords=discard] table {
         0    0.6632
1.0000    0.7324
2.0000    0.9357
3.0000    1.2834
4.0000    1.7612
5.0000    2.2692
6.0000    2.7491
7.0000    3.1443
8.0000    3.4658
9.0000    3.7154
10.0000    3.9129
};
\addplot[very thick, color=black,filter discard warning=false, unbounded coords=discard] table {
         0    0.6672
1.0000    0.7390
2.0000    0.9459
3.0000    1.2903
4.0000    1.7664
5.0000    2.2802
6.0000    2.7622
7.0000    3.1487
8.0000    3.4737
9.0000    3.7271
10.0000    3.9257

};
\addplot[very thick, color=black,filter discard warning=false, unbounded coords=discard] table {
         0    0.6738
1.0000    1.1676
2.0000    2.4595
3.0000    3.9841
4.0000    5.1489
5.0000    5.7569
6.0000    5.9290
7.0000    5.8901
8.0000    5.8360
9.0000    5.7148
10.0000    5.6323
};
\end{axis}
\end{tikzpicture} 
	\caption{Eigenvalues of matrix quadratic risk of generalized Bayes estimators ($n=5$, $p=3$, $\sigma_2=\sigma_3=0$). left: $\pi_{\mathrm{SVS}}$. right: $\pi_{\mathrm{S}}$.  The dashed line shows $n=5$. Note that the second and third eigenvalues almost overlap in both plots.}
	\label{fig2}
\end{figure}
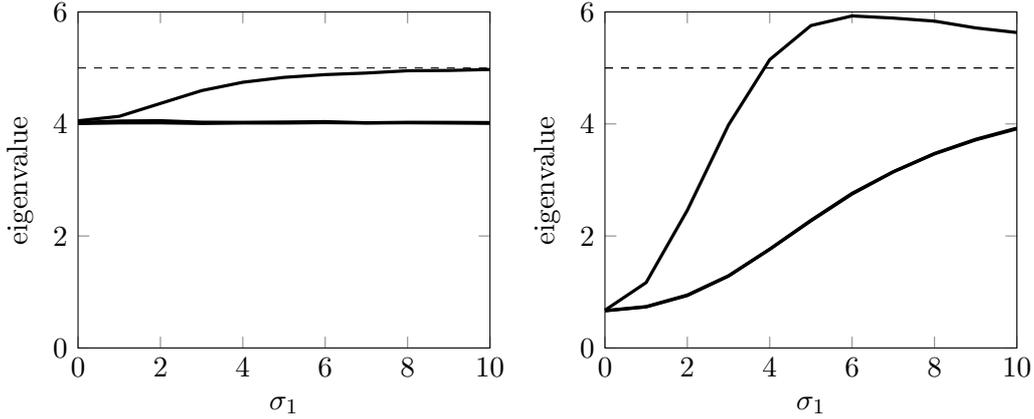

Next, we compare the Efron--Morris estimator $\hat{M}_{\mathrm{EM}}=X(I-(n-p-1) (X^{\top} X)^{-1})$ and the James--Stein estimator $\hat{M}_{\mathrm{JS}}=(1-(np-2)/\| X \|_{\mathrm{F}}^2)X$ in higher dimension.
Note that these estimators have almost the same risk with the generalized Bayes estimators with respect to $\pi_{{\rm SVS}}$ and $\pi_{{\rm S}}$, respectively.

Figure~\ref{fig3} plots the 20 eigenvalues $\lambda_1 \geq \cdots \geq \lambda_{20}$ of the matrix quadratic risk with respect to $\sigma_1$ when $n=100$, $p=20$, $\sigma_i=(6-i)/5 \cdot \sigma_1$ $(i=2,\dots,5)$ and $\sigma_6=\dots=\sigma_{20}=0$.
Thus, the rank of $M$ is five. 
The results are qualitatively the same with Figure~\ref{fig2}, with the advantage of the singular value shrinkage more pronounced.
For $\hat{M}_{\mathrm{EM}}$, all eigenvalues are smaller than $n=100$.
In particular, the bottom 15 eigenvalues are almost constant around 20: $\lambda_{6} \approx \cdots \approx \lambda_{20} \approx 20$.
On the other hand, for $\hat{M}_{\mathrm{JS}}$, the largest eigenvalue $\lambda_1$ grows rapidly with $\sigma_1$ and exceeds $n=100$ when $\sigma_1 \geq 10$.
Other eigenvalues also increase with $\sigma_1$, including $\lambda_{6}, \dots, \lambda_{20}$.
These results show that the singular value shrinkage works well for low rank matrices.

	\input{fig3.tex}
	
For the Efron--Morris estimator, the above simulation results suggest that the $i$-th eigenvalue $\lambda_i$ of the matrix quadratic risk depends only on the $i$-th singular value $\sigma_i$ of $M$ approximately: $\lambda_i \approx g_{n,p}(\sigma_i)$ for some function $g_{n,p}$.
Finally, we investigate this functional relation $g_{n,p}$ numerically.
Figure~\ref{fig4} plots $\lambda_1$ with respect to $\sigma_1$ when $\sigma_2=\dots=\sigma_p=0$ for several values of $n$ and $p$.
It indicates that $g_{n,p}(0) \approx p$, which is compatible with $R(O,\hat{M}_{\mathrm{EM}}) = (p+1) I_p$ from Corollary~\ref{cor_EM}.
In addition, Figure~\ref{fig4} implies $g_{n,p}(\sigma) \to n$ as $\sigma \to \infty$.
This is understood from the fact that the Efron--Morris estimator $\hat{M}_{\mathrm{EM}}$ becomes essentially the same with the maximum likelihood estimator $\hat{M}=X$, which has the constant risk $R(M,\hat{M}) = n I_p$, in a direction of a very large singular value \cite{Stein74,Matsuda}. 
These properties of $g_{n,p}$ provide a quantification of the advantage of the Efron--Morris estimator over the maximum likelihood estimator when $M$ has low rank.
Namely, if $M$ has rank $r<p$, then $\sigma_{r+1}=\cdots=\sigma_p=0$ and thus $\lambda_{r+1},\dots,\lambda_p$ should be close to $g_{n,p}(0) \approx p$.
Therefore, the reduction in the Frobenius risk is evaluated as 
\begin{align*}
    np-{\rm tr} \ R(M,\hat{M}_{\mathrm{EM}}) \geq np - rn - (p-r)p = np \left( 1-\frac{r}{p} \right) \left(1-\frac{p}{n} \right).
\end{align*}
Thus, the Efron--Morris estimator attains large risk reduction especially when either $p/r$ or $n/p$ is large.

\begin{figure}[htbp]
\centering
\begin{tikzpicture}
\tikzstyle{every node}=[]
\begin{axis}[width=7cm,
xmax=30,xmin=0,
ymax=110, ymin=0,
xlabel={$\sigma_1$},ylabel={eigenvalue},
ylabel near ticks,
	]
\addplot[thin, dashed, color=black,
filter discard warning=false, unbounded coords=discard
] table {
         0    100
   30.0000    100
};
\addplot[very thick, color=black,filter discard warning=false, unbounded coords=discard] table {

ans =

         0   11.1974
    1.0000   11.9084
    2.0000   14.4810
    3.0000   18.3512
    4.0000   23.3183
    5.0000   28.8730
    6.0000   34.6941
    7.0000   40.5008
    8.0000   45.9553
    9.0000   50.9981
   10.0000   55.6554
   11.0000   59.9499
   12.0000   63.8238
   13.0000   67.1157
   14.0000   70.1878
   15.0000   72.8127
   16.0000   75.4635
   17.0000   77.3605
   18.0000   79.0797
   19.0000   80.9386
   20.0000   82.5246
   21.0000   83.9171
   22.0000   84.8184
   23.0000   85.8277
   24.0000   87.0132
   25.0000   88.0199
   26.0000   88.8291
   27.0000   89.1806
   28.0000   90.0996
   29.0000   90.6695
   30.0000   91.1175

};
\addplot[very thick, color=black,filter discard warning=false, unbounded coords=discard] table {

ans =

         0   21.3701
    1.0000   21.7333
    2.0000   24.0049
    3.0000   27.6794
    4.0000   31.9907
    5.0000   36.8373
    6.0000   41.9883
    7.0000   47.0656
    8.0000   51.9502
    9.0000   56.5843
   10.0000   60.6619
   11.0000   64.4665
   12.0000   67.9900
   13.0000   70.7586
   14.0000   73.5911
   15.0000   75.9175
   16.0000   77.9588
   17.0000   80.0377
   18.0000   81.5969
   19.0000   83.1173
   20.0000   84.2924
   21.0000   85.4577
   22.0000   86.9514
   23.0000   87.8169
   24.0000   88.3410
   25.0000   89.0445
   26.0000   89.8787
   27.0000   90.8432
   28.0000   91.2027
   29.0000   91.7510
   30.0000   92.2820

};
\addplot[very thick, color=black,filter discard warning=false, unbounded coords=discard] table {

ans =

         0   31.5386
    1.0000   31.9944
    2.0000   33.6626
    3.0000   36.7732
    4.0000   40.6499
    5.0000   44.7673
    6.0000   49.3539
    7.0000   53.6926
    8.0000   58.0472
    9.0000   62.1785
   10.0000   65.7416
   11.0000   68.9465
   12.0000   71.9578
   13.0000   74.6543
   14.0000   76.7943
   15.0000   79.0254
   16.0000   80.7577
   17.0000   82.4683
   18.0000   83.8904
   19.0000   85.4900
   20.0000   86.2326
   21.0000   87.5020
   22.0000   88.3617
   23.0000   89.4502
   24.0000   89.7344
   25.0000   90.5004
   26.0000   91.1211
   27.0000   91.8843
   28.0000   92.3118
   29.0000   92.7392
   30.0000   93.1386

};
\addplot[very thick, color=black,filter discard warning=false, unbounded coords=discard] table {

ans =

         0   41.7986
    1.0000   41.7997
    2.0000   43.5120
    3.0000   45.8651
    4.0000   49.1916
    5.0000   52.8760
    6.0000   56.6265
    7.0000   60.7385
    8.0000   64.2768
    9.0000   67.4982
   10.0000   70.7581
   11.0000   73.3758
   12.0000   75.9317
   13.0000   78.2296
   14.0000   80.2160
   15.0000   81.9166
   16.0000   83.6526
   17.0000   85.1347
   18.0000   86.2276
   19.0000   87.3822
   20.0000   88.4110
   21.0000   89.2712
   22.0000   89.9992
   23.0000   90.7752
   24.0000   91.3813
   25.0000   92.0411
   26.0000   92.5118
   27.0000   92.9188
   28.0000   93.5175
   29.0000   93.8212
   30.0000   94.1382

};
\addplot[very thick, color=black,filter discard warning=false, unbounded coords=discard] table {

ans =

         0   52.0397
    1.0000   51.9833
    2.0000   52.9483
    3.0000   55.0897
    4.0000   57.7221
    5.0000   60.7878
    6.0000   63.9889
    7.0000   67.1676
    8.0000   70.0104
    9.0000   73.1407
   10.0000   75.5794
   11.0000   78.0206
   12.0000   79.9496
   13.0000   81.9382
   14.0000   83.6451
   15.0000   85.1560
   16.0000   86.4389
   17.0000   87.7262
   18.0000   88.5181
   19.0000   89.4079
   20.0000   90.1603
   21.0000   91.0883
   22.0000   91.5010
   23.0000   92.2521
   24.0000   92.8736
   25.0000   93.3448
   26.0000   93.9792
   27.0000   94.1911
   28.0000   94.5696
   29.0000   95.0500
   30.0000   95.1520

};
\end{axis}
\end{tikzpicture} 
\begin{tikzpicture}
\tikzstyle{every node}=[]
\begin{axis}[width=7cm,
xmax=100,xmin=0,
ymax=1100, ymin = 0,
xlabel={$\sigma_1$},ylabel={eigenvalue},
ylabel near ticks,
	]
\addplot[thin, dashed, color=black,
filter discard warning=false, unbounded coords=discard
] table {
         0    1000
   100.0000    1000
};
\addplot[very thick, color=black,
filter discard warning=false, unbounded coords=discard
] table {
         0  102.9150
    5.0000  122.9424
   10.0000  182.4874
   15.0000  266.0985
   20.0000  358.0096
   25.0000  446.6412
   30.0000  527.1198
   35.0000  595.7522
   40.0000  654.4065
   45.0000  702.9785
   50.0000  743.0993
   55.0000  777.0996
   60.0000  805.1100
   65.0000  827.5551
   70.0000  848.0654
   75.0000  865.0142
   80.0000  878.6218
   85.0000  890.5596
   90.0000  901.6521
   95.0000  910.4038
  100.0000  918.2384
};
\addplot[very thick, color=black,
filter discard warning=false, unbounded coords=discard
] table {
         0  205.0250
    5.0000  220.5226
   10.0000  273.6145
   15.0000  347.9302
   20.0000  429.4305
   25.0000  508.6499
   30.0000  580.0406
   35.0000  641.1476
   40.0000  692.4526
   45.0000  736.8180
   50.0000  772.2710
   55.0000  801.7196
   60.0000  827.1215
   65.0000  846.6474
   70.0000  864.7445
   75.0000  879.6644
   80.0000  892.5706
   85.0000  903.5432
   90.0000  912.4401
   95.0000  920.8700
  100.0000  927.8206
};
\addplot[very thick, color=black,
filter discard warning=false, unbounded coords=discard
] table {
         0  306.9390
    5.0000  318.4629
   10.0000  365.0158
   15.0000  429.1102
   20.0000  500.7877
   25.0000  570.1737
   30.0000  632.0691
   35.0000  686.3663
   40.0000  731.2966
   45.0000  768.9628
   50.0000  800.3949
   55.0000  826.6500
   60.0000  848.3529
   65.0000  866.2411
   70.0000  881.2346
   75.0000  894.3587
   80.0000  905.4103
   85.0000  915.6277
   90.0000  923.7018
   95.0000  930.9497
  100.0000  936.0608
};
\addplot[very thick, color=black,
filter discard warning=false, unbounded coords=discard
] table {
         0  409.1166
    5.0000  417.0775
   10.0000  456.0781
   15.0000  511.4504
   20.0000  572.2593
   25.0000  631.3804
   30.0000  684.5915
   35.0000  730.4222
   40.0000  770.3247
   45.0000  802.3645
   50.0000  829.4483
   55.0000  850.8155
   60.0000  870.2514
   65.0000  884.6795
   70.0000  899.0987
   75.0000  909.6312
   80.0000  918.2161
   85.0000  927.2496
   90.0000  934.3329
   95.0000  940.0774
  100.0000  945.7491
};
\addplot[very thick, color=black,
filter discard warning=false, unbounded coords=discard
] table {
         0  511.2952
    5.0000  515.1762
   10.0000  546.4629
   15.0000  593.4353
   20.0000  644.1145
   25.0000  693.1004
   30.0000  737.6283
   35.0000  776.0212
   40.0000  808.7994
   45.0000  834.7852
   50.0000  857.6820
   55.0000  876.3285
   60.0000  891.8634
   65.0000  905.2051
   70.0000  915.4261
   75.0000  925.6541
   80.0000  932.7582
   85.0000  939.2766
   90.0000  945.1541
   95.0000  949.7294
  100.0000  954.7877
};
\end{axis}
\end{tikzpicture} 
	\caption{Largest eigenvalue of matrix quadratic risk of the Efron--Morris estimator ($\sigma_2=\dots=\sigma_p=0$). left: $n=100$, $p=10,20,\dots,50$. right: $n=1000$, $p=100,200,\dots,500$. The dashed line shows $n$.}
	\label{fig4}
\end{figure}
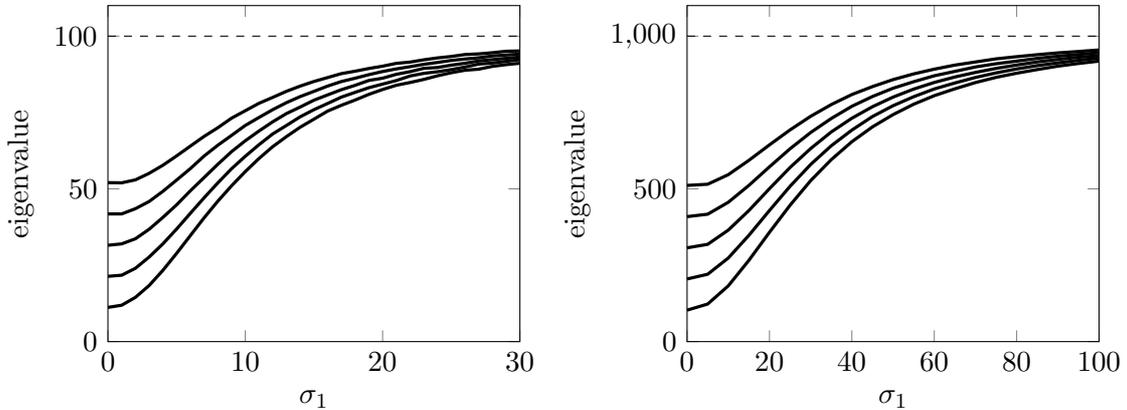

\section*{Acknowledgements}
We thank Yuzo Maruyama for helpful comments.
This work was supported by JSPS KAKENHI Grant Number 19K20220.
This work was partially funded by grant \# 418098 from the Simons Foundation to William Strawderman.

\end{document}